\newcommand{\ocs}{\hbox{OccSet}}
\newcommand{\non}{\operatorname{Non}}
\newcommand{\lin}{\operatorname{Lin}}
\newcommand{\iso}{\operatorname{Isot}}
\newcommand{\var}{\operatorname{var}}
\newtheorem{theorem}{Theorem}[section]
\newtheorem{ex}[theorem]{Example}
\newtheorem{cor}[theorem]{Corollary}
\newtheorem{fact}[theorem]{Fact}
\newtheorem{lemma}[theorem]{Lemma}
\newtheorem{prop}[theorem]{Proposition}
\newtheorem{question}{Question}
\newtheorem{claim}{Claim}
\begin{document}

\title{Finitely based sets of 2-limited block-2-simple words.}
\author{ Olga Sapir }
\date{Communicated by Mikhail Volkov}

\maketitle

\begin{abstract}  Let $\mathfrak A$ be an alphabet and $W$ be a set of words in the free monoid ${\mathfrak A}^*$. Let $S(W)$ denote the Rees quotient  over the ideal of  ${\mathfrak A}^*$ consisting of all words that are not subwords of words in $W$. A set of words $W$ is called {\em finitely based}  if the monoid $S(W)$ is finitely based.

 A word $\bf u$ is called 2-limited if each variable occurs in $\bf u$ at most twice.
A {\em block} of a word $\bf u$ is a maximal subword of $\bf u$ that does not contain any linear variables.
We say that a word $\bf u$ is  {\em block-2-simple} if each block of $\bf u$ involves at most two distinct variables.
We provide an algorithm that recognizes finitely based sets of words among sets of 2-limited block-2-simple words.

We also present new sufficient conditions under which a set of words is non-finitely based.

\end{abstract}

\section{Introduction}

A semigroup is said to be {\em finitely based} (FB) if there is a finite subset of its identities from which all of its identities may be deduced.
Otherwise, a semigroup is said to be {\em non-finitely based} (NFB). Throughout this article, elements of a countably infinite alphabet $\mathfrak A$ are called {\em variables} and elements of the free monoid $\mathfrak A^*$  and free semigroup $\mathfrak A^+$ are called {\em words}. In 1969, Perkins \cite{P} found the first two examples of finite NFB semigroups.  One of these examples was the 25-element monoid obtained from the set of words
$W= \{abtba, atbab, abab, aat\}$ by using the following construction attributed to Dilworth.

 Let ${\mathfrak A}$ be an alphabet and $W$ be a set of words in the free monoid ${\mathfrak A}^*$. Let $S(W)$ denote the Rees quotient  over the ideal of  ${\mathfrak A}^*$ consisting of all words that are not subwords of words in $W$. For each set of words $W$, the semigroup $S(W)$ is a monoid with zero whose nonzero elements are the subwords of words in $W$. Evidently, $S(W)$ is finite if and only if $W$ is finite.


 We call a set of words $W$ {\em finitely based} if the monoid $S(W)$ is finitely based.
 In this paper we continue to study the following problem.

\begin{question}\cite[M. Sapir]{SV} \label{qMS} Given a set of words $W$ is there an algorithm that decides if $W$ is FB or NFB?
\end{question}

Articles \cite{ LZL, LL, OS, OS3} provide some partial answers to this question when $W$ consists of a single word. In particular,
article \cite{OS} contains an algorithm for selecting FB words among words over a two-letter alphabet.

\begin{fact} \label{2varone} \cite[Theorem 5.1]{OS}  A word $\bf u$ over a two-letter alphabet $\{a,b\}$ is FB if and only if modulo renaming variables, $\bf u$ is of the form $a^nb^m$ or $a^nba^m$ for some $n,m\ge 0$.
\end{fact}

 However, the task of describing all FB {\em sets} of words even over a two-letter alphabet is very challenging \cite{JS}.
A word $\bf u$ is called {\em $k$-limited} if each variable occurs in $\bf u$ at most $k$ times. Recent result of Jackson \cite{MJ2}
implies that  the problem of classifying all FB sets of 2-limited words is also very difficult.
  On the other hand, articles \cite{JS} and \cite{OS} readily imply the following description of all FB sets of 2-limited words over a two-letter alphabet.

\begin{fact} \label{2var} Let $W$ be a set of 2-limited words over a two-letter alphabet. Then $W$ is FB if and only if modulo renaming variables, $W$
contains $\{a^2b^2, abab, abba\}$ or  every word in $W$ is a subword of a word in $\{a^2b^2, aba\}$.
\end{fact}

 If a variable $t$ occurs exactly once in a word $\bf u$ then we say that $t$ is {\em linear} in $\bf u$. If a variable $x$ occurs
 more than once in ${\bf u}$ then we say that $x$ is {\em non-linear} in ${\bf u}$.
Recently, we succeeded to generalize Fact \ref{2varone} into an algorithm that selects FB words among words with at most two non-linear variables \cite{OS3}.

A {\em block} of $\bf u$ is a maximal subword of $\bf u$ that does not contain any linear variables of $\bf u$.
A word $\bf u$ is called  {\em block-n-simple} if each block of $\bf u$ involves at most n distinct variables. For example, the word
$ababt_1bccbcbt_2caa$ is block-2-simple. Evidently, every word with at most two non-linear variables is block-2-simple.
The main goal of this article is to generalize Fact \ref{2var} into an algorithm that selects FB sets of words among  sets of 2-limited block-2-simple words.

 We use $\var S$ to refer to the variety of semigroups generated by $S$.
When $\var S(W) = \var S(W')$ we write $W \sim W'$ and say that $W$ and $W'$ are {\em equationally equivalent} (abbreviated to {\em e.e.}).  Notice that two equationally equivalent sets of words might look quite different. For example, it is easy to check that $\{a^2b^2c^2, ded\} \sim \{a^2b^2\}$. In view of Lemma \ref{nlimited}(ii) below, Fact \ref{2var} can be reformulated as follows.

\begin{fact} \label{2varcor} (cf. Fact \ref{2var}) Let $W$ be a set of 2-limited words over a two-letter alphabet. Then $W$ is FB if and only if modulo renaming variables, $W \sim \{a^2b^2, abab, abba\}$ or $W \sim \{a^2b^2\}$ or $S(W)$ is contained in $\var S(\{a^2b, ab^2\})$.
\end{fact}

Recall that a monoid $M$ is said to be {\em hereditarily finitely based} (HFB) if every monoid subvariety of $\var M$ is finitely based.
If $W$ is a set of words such that the monoid $S(W)$ is HFB then we say that $W$ is HFB. HFB sets of words can be easily recognized
syntactically using Corollary 8.3 in \cite{OS3} (cf. Lemma \ref{hfb} below). In particular,  every set of block-1-simple words is HFB. Thus $\{a^2b, ab^2\}$ is HFB and Fact \ref{2varcor} can  be reformulated as follows.

\begin{fact} \label{2varcor1} (cf. Fact \ref{2varcor}) Let $W$ be a set of 2-limited words over a two-letter alphabet. Then $W$ is FB if and only if modulo renaming variables, $W \sim \{a^2b^2, abab, abba\}$ or $W \sim \{a^2b^2\}$ or $W$ is HFB.
\end{fact}

According to \cite[Corollary 3.8]{OS3}, every block-1-simple word is e.e. to a set of words with at most one non-linear variable. We do not know whether every block-2-simple word  is e.e. to a set of words with at most two non-linear variables or not.
The following theorem generalizes Fact \ref{2varcor1} and will be proved in Section \ref{sec:alg}.

\begin{theorem} \label{main1} (cf. Theorem \ref{main}) A set of 2-limited block-2-simple words $W$ is FB if and only if
one of the following is true:

(A) $W \sim \{a^2b^2, abab, abba\}$;

(B)  $W \sim \{a^2b^2\}$;

(C) $W$ is HFB;

(D) $\var S(\{abtab, abtba\}) \subseteq \var S(W) \subseteq \var S(\{abtab, abtba, atbba \})$;

(D$'$)  $\var S(\{abtab, abtba\}) \subseteq \var S(W) \subseteq \var S(\{abtab, abtba, abbta \})$.

\end{theorem}

The `if' part of Theorem \ref{main1} is an immediate consequence from Corollary 3.2 in \cite{JS}, Theorem 5.1 in \cite{OS} and Theorem 5.3 in \cite{OS2}.
 In Section \ref{sec:fb}, we provide simple algorithms for verifying when a set of words $W$ satisfies each of the five conditions (A)--(D$'$) in Theorem \ref{main1}.

In order to prove Theorem \ref{main1} we establish some sufficient conditions under which a set of words is NFB (see Theorems \ref{xyyx} and \ref{t1} below). The sufficient condition in  Theorem \ref{xyyx} has an especially simple formulation and it inspired us to find another similar sufficient condition in Section \ref{sec:sufcon}.

\begin{theorem} \label{xyyx1} (cf. Corollary \ref{sevenint}) A set of word $W$ is NFB provided it satisfies one of the following:

(i) (Theorem \ref{xyyx})  $\var S(W)$ contains $S(\{abba\})$  but does not contain $S(\{at_1abt_2b\})$;

(ii) (Corollary \ref{xytxy})  $\var S(W)$ contains  $S(\{abtab\})$  but does not contain $S(\{abtba\})$.
\end{theorem}

In Section \ref{sec:twovar} we establish some sufficient conditions under which a word is e.e. to a set of words with at most two non-linear variables. In particular, we show that every HFB set of words is e.e. to a set of words with at most two non-linear variables  (see Theorem \ref{eqeq} below).

\section{Preliminaries}\label{sec:iso}

\subsection{Isoterms and related concepts}

A word $\bf u$ is said to be an {\em isoterm} \cite{P} for a semigroup $S$ if $S$ does not satisfy any non-trivial identity of the form ${\bf u} \approx {\bf v}$.


\begin{lemma} \label{prec}  \cite[Lemma 3.3]{MJ}
Let $W$ be a set of words and $S$ be a monoid.
Then each word in $W$ is an isoterm for $S$ if and only if $\var(S)$ contains $S(W)$.
\end{lemma}

Lemma \ref{prec} allows  to introduce a quasi-order on sets of words in $\mathfrak A^*$ as follows.
If $W$ and $W'$ are two sets of words then we write $W \preceq W'$ if for any monoid $S$ each word in $W'$ is an isoterm for $S$ whenever  each word in $W$ is an isoterm for $S$.

The following proposition implies that if  $W \preceq W' \preceq W$ then  $W$ and $W'$ are equationally equivalent ($W \sim W'$).
It also implies that if we identify sets of words modulo $\sim$ then we obtain an ordered set antiisomorphic to
the set of all varieties of the form $\var S(W)$ ordered under inclusion.

\begin{prop} \label{pr1} \cite[Proposition 2.3]{OS3} For two sets of words $W$ and $W'$
the following conditions are equivalent:

(i)  $W \preceq W'$;

(ii) Each word in $W'$ is an isoterm for $S(W)$;

(iii) $\var S(W)$ contains  $S(W')$.

\end{prop}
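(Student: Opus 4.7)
My plan hinges on two building blocks. First, the \emph{tautological observation} (O): each word $\mathbf{w} \in W$ is an isoterm for $S(W)$, established by substituting each variable to itself --- the left-hand side then evaluates to $\mathbf{w}$ (nonzero in $S(W)$), and the right-hand side evaluates to some word $\mathbf{v}$ or to $0$; in either case the identity forces $\mathbf{v} = \mathbf{w}$. Second, the \emph{key equivalence} (A): for any monoid $M$, $S(W) \in \var M$ if and only if every word in $W$ is an isoterm for $M$. Together these give the whole proposition, because, applied with $W$ replaced by $W'$ and $M = S(W)$, (A) is literally the equivalence (ii) $\Leftrightarrow$ (iii).

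To derive (i) $\Leftrightarrow$ (ii), the forward direction is immediate: specialize the universal quantifier in (i) to $S = S(W)$ and invoke (O). For the reverse, given any monoid $S$ for which every $\mathbf{w} \in W$ is an isoterm, (A) yields $S(W) \in \var S$, so every identity satisfied by $S$ is satisfied by $S(W)$; consequently every isoterm for $S(W)$ is an isoterm for $S$, and (ii) gives that each $\mathbf{w}' \in W'$ is an isoterm for $S$.

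The whole argument therefore reduces to (A). The forward direction of (A) is easy: if $S(W) \in \var M$ then every identity of $M$ is an identity of $S(W)$, so any nontrivial $\mathbf{w} \approx \mathbf{v}$ satisfied by $M$ would contradict the isotermness of $\mathbf{w}$ for $S(W)$ from (O). The converse is the nontrivial part: assuming each word in $W$ is an isoterm for $M$, one must show that $S(W)$ satisfies every identity $\mathbf{u} \approx \mathbf{v}$ of $M$. I would take a substitution $\phi : X^* \to S(W)$ with $\phi(\mathbf{u}) \ne 0$, write $\phi(\mathbf{u})$ as a subword of some $\mathbf{w} \in W$, say $\mathbf{w} = \alpha\,\phi(\mathbf{u})\,\beta$ with $\alpha,\beta \in \mathfrak{A}^*$, view the letters of $\mathfrak{A}$ appearing in $\mathbf{w}$ as fresh variables, and specialize the consequence $\alpha \mathbf{u} \beta \approx \alpha \mathbf{v} \beta$ of $\mathbf{u} \approx \mathbf{v}$ at the substitution sending each variable $x_i$ to the word $\phi(x_i) \in \mathfrak{A}^*$. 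The resulting identity of $M$ has the shape $\mathbf{w} \approx \alpha\,\phi(\mathbf{v})\,\beta$, and isotermness of $\mathbf{w}$ forces $\alpha\,\phi(\mathbf{v})\,\beta = \mathbf{w}$ as words in $\mathfrak{A}^*$; canceling $\alpha$ and $\beta$ in the free monoid gives $\phi(\mathbf{v}) = \phi(\mathbf{u})$, hence equality in $S(W)$.

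The main obstacle I anticipate is this converse direction of (A), which also requires a layer of bookkeeping around degenerate cases: one should first reduce to the situation where $\mathbf{u}$ and $\mathbf{v}$ have the same variable set (achieved by substituting the empty word for variables appearing on only one side, which preserves the identity in the monoid $M$) and then symmetrize the $\phi(\mathbf{u}) \ne 0$ versus $\phi(\mathbf{v}) \ne 0$ cases. Once these reductions are in place, the substitution-lift described above runs without further complication, and the remaining equivalences cascade from (O) and (A).
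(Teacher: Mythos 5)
The paper itself gives no proof of this proposition---it is quoted verbatim from \cite[Proposition 2.3]{OS3}---so your attempt can only be measured against the standard argument. Your architecture is exactly that standard route: the tautological observation (O) that every $\mathbf{w}\in W$ is an isoterm for $S(W)$, plus the key equivalence (A), which is essentially Lemma 3.3 of \cite{JS} (equivalently, the content of \cite[Proposition 2.3]{OS3}): $S(W)\in\var M$ if and only if every word of $W$ is an isoterm for $M$. Your derivation of (i)$\Leftrightarrow$(ii)$\Leftrightarrow$(iii) from (O) and (A) is correct, and your lift argument for the converse of (A)---embed $\phi(\mathbf{u})$ as $\boldsymbol{\alpha}\,\phi(\mathbf{u})\,\boldsymbol{\beta}=\mathbf{w}\in W$, pass to the consequence $\boldsymbol{\alpha}\mathbf{u}\boldsymbol{\beta}\approx\boldsymbol{\alpha}\mathbf{v}\boldsymbol{\beta}$ with the letters of $\mathbf{w}$ as fresh variables, substitute, invoke isotermness of $\mathbf{w}$ for $M$, and cancel in the free monoid---is the right mechanism and handles, in particular, the case where $\phi(\mathbf{v})$ would be a nonzero-letter product falling outside $W^c$.

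There is, however, one step that fails as stated: your proposed reduction to the case $\con(\mathbf{u})=\con(\mathbf{v})$ by ``substituting the empty word for variables appearing on only one side.'' That substitution does produce a valid identity of $M$, but verifying the \emph{reduced} identity in $S(W)$ says nothing about the \emph{original} one: if $y\in\con(\mathbf{v})\setminus\con(\mathbf{u})$ and $\phi(y)=0$ while $\phi(\mathbf{u})\ne 0$, then $\phi(\mathbf{v})=0\ne\phi(\mathbf{u})$, and no identity obtained by deleting $y$ can rescue this. What is actually true, and what the proof needs, is that $M$ can satisfy \emph{no} identity with one-sided variables once some nonempty $\mathbf{w}\in W$ is an isoterm for it: substituting the empty word for every variable except such a $y$ turns $\mathbf{u}\approx\mathbf{v}$ into $1\approx z^{k}$ with $k\ge 1$, whence $M\models \mathbf{w}\approx\mathbf{w}z^{k}$, a nontrivial identity contradicting the isotermness of $\mathbf{w}$. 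With this two-line argument replacing your reduction (and the trivial remark that the case $\phi(\mathbf{u})=\phi(\mathbf{v})=0$ needs nothing), your proof is complete; as it stands, the bookkeeping step you flagged as the main obstacle is handled by an invalid move, even though the fix lives entirely within your own framework.
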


The relations $\preceq$ and $\sim$ can be extended to individual words. For example, if $\bf u$ and $\bf v$ are two words then ${\bf u} \sim {\bf v}$ means $\{{\bf u}\} \sim \{{\bf v}\}$. Also, if $W$ is a set of words and $\bf u$ is a word then $W \preceq {\bf u}$  means $W \preceq \{{\bf u}\}$.

We use $W^{\le}$ to denote the closure of $W$ under taking subwords and $W^{\preceq}$ to denote the closure of $W$ under going up in order $\preceq$. It is easy to see that $W \subseteq W^{\le} \subseteq  W^{\preceq}$ and $W \sim W^{\le} \sim  W^{\preceq}$.

Given a set of identities $\Sigma$, we use $\Sigma^\delta$ to denote the closure of $\Sigma$ under deleting variables. For example,
$\{xytxy \approx yxtyx\}^\delta = \{xytxy \approx yxtyx, xyxy \approx yxyx\}$.
 We use $\iso(\Sigma)$ to denote the set of all words in ${\mathfrak A}^*$ that are isoterms for $\var \Sigma^\delta$ (the semigroup variety defined by $\Sigma^\delta$).
Using Lemma \ref{prec} it is easy to show that $W = \iso(\Sigma)$ is the largest subset of ${\mathfrak A}^*$ such that $S(W)$ is contained in $\var \Sigma^\delta$ (see Fact 8.1 in \cite{OS1}).

\subsection{Left and right sides of the identities  $\sigma_\mu$, $\sigma_1$ and $\sigma_2$ }

The following  identities
\[xt_1xyt_2y \approx xt_1yxt_2y,  \hskip.1in xyt_1xt_2y \approx yxt_1xt_2y, \hskip.1in  xt_1yt_2xy \approx xt_1yt_2yx\] we denote respectively by $\sigma_{\mu}$,  $\sigma_1$ and $\sigma_{2}$. Notice that the identities $\sigma_1$ and $\sigma_{2}$ are dual to each other. We use letter $t$ with or without subscripts to denote linear (1-occurring) variables. If we use letter $t$ several times in a word,  we assume that different occurrences of $t$ represent distinct linear variables; so $xtxytyt$ abbreviates $xt_1xyt_2yt_3$ for example.
Using this abbreviation, the identities $\sigma_\mu$, $\sigma_1$ and $\sigma_2$ we rewrite as follows:
\[xtxyty \approx xtyxty,  \hskip.1in xytxty \approx yxtxty, \hskip.1in  xtytxy \approx xtytyx.\]
The next fact says that the left and the right sides of the identities  $\sigma_\mu$, $\sigma_1$ and $\sigma_2$ are e.e. to each other and will be often used without a reference.

 \begin{fact} \label{xy3}

 \cite[Fact 3.2]{OS1} $xtxyty \sim xtyxty$,  $xytxty \sim yxtxty$,  $xtytxy \sim xtytyx$.

\end{fact}

Here is another very useful fact about the left and right sides of the identities $\sigma_\mu$, $\sigma_1$ and $\sigma_2$.

\begin{fact} \label{xtx} \cite[Fact 3.1]{OS1}
If $xtx$ is an isoterm for a monoid $S$, then

(i) the words $xtyxty$ and $xtxyty$ can only
form an identity of $S$ with each other;

(ii) the words $xytxty$ and $yxtxty$ can only
form an identity of $S$ with each other;

(iii) the words $xtytxy$ and $xtytyx$ can only
form an identity of $S$ with each other.
\end{fact}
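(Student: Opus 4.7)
The plan is to fix a monoid $S$ for which $xtx$ is an isoterm and, illustrating with case (i), show that the only identities of $S$ of the form $xt_1yxt_2y\approx{\bf v}$ have ${\bf v}\in\{xt_1yxt_2y,xt_1xyt_2y\}$. Cases (ii) and (iii) follow by the same template with the roles of the two four-letter ``deletion words'' adjusted.

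The first step is to upgrade the isoterm hypothesis to the claim that the four-letter words $xt_1xt_2$ and $t_1yt_2y$ are themselves isoterms for $S$. Suppose $S\models xt_1xt_2\approx w$. Substituting $t_2\mapsto 1$ yields an identity with the isoterm $xt_1x$ on one side, which forces $w$ to contain exactly two $x$'s with $t_1$ between them; substituting the remaining non-$t_2$ letters to $1$ further yields $t_2\approx t_2^{\ell}$ for the number $\ell$ of $t_2$'s in $w$, and any value $\ell\neq 1$ is incompatible with the $xtx$-isoterm hypothesis (via $S$ being trivial or via the substitution $t\mapsto xtx$). The remaining two candidates for $w$ are $xt_1xt_2$ and $xt_1t_2x$. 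The latter fails because substituting $t_1\mapsto 1$ into $xt_1xt_2\approx xt_1t_2x$ yields $xxt_2\approx xt_2x$, an identity with a renaming of $xtx$ on the right, contradicting the isoterm assumption. So $w=xt_1xt_2$, proving $xt_1xt_2$ is an isoterm; a symmetric argument does the same for $t_1yt_2y$.

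The second step applies the substitutions $y\mapsto 1$ and $x\mapsto 1$ to the identity $xt_1yxt_2y\approx{\bf v}$. The former sends the left-hand side to the isoterm $xt_1xt_2$, forcing ${\bf v}|_{y\mapsto 1}=xt_1xt_2$; the latter analogously forces ${\bf v}|_{x\mapsto 1}=t_1yt_2y$. These two equations determine the non-$y$ subsequence and the non-$x$ subsequence of ${\bf v}$ respectively, and a short combinatorial check placing the six letters $\{x,x,y,y,t_1,t_2\}$ consistently with both leaves exactly the two listed words $xt_1yxt_2y$ and $xt_1xyt_2y$.

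Cases (ii) and (iii) go through by the same two-step procedure: for (ii) one shows $xt_1xt_2$ and $yt_1t_2y$ are isoterms and then pins down ${\bf v}\in\{xyt_1xt_2y,yxt_1xt_2y\}$; for (iii) one shows $xt_1t_2x$ and $t_1yt_2y$ are isoterms and then pins down ${\bf v}\in\{xt_1yt_2xy,xt_1yt_2yx\}$. The main obstacle is the verification in Step 1 that each relevant four-letter word is an isoterm; the common pattern is that the only nontrivial alternative collapses, under a well-chosen linear-variable substitution, to either a direct $xtx\approx xxt$-type identity or to the commutativity identity $t_1t_2\approx t_2t_1$, and the latter likewise forces $xtx\approx xxt$ and hence contradicts the isoterm hypothesis.
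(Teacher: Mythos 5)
Your proposal is correct, but note that there is nothing in this paper to compare it against: Fact \ref{xtx} is imported by citation from \cite[Fact 3.1]{OS1}, and no proof of it appears here. Your two-step deletion argument is precisely the standard technique of this literature (it is the same style of reasoning the paper itself uses, e.g., in the proof of Fact \ref{abcacb}): deleting $t_2$ from $xt_1xt_2 \approx {\bf w}$ and invoking the isoterm $xtx$ pins the non-$t_2$ content of ${\bf w}$ to be exactly $x t_1 x$, which also excludes extraneous variables for free; your treatment of the multiplicity $t_2 \approx t_2^{\ell}$ is sound ($\ell = 0$ trivializes $S$, while $\ell \ge 2$ gives the non-trivial identity $xtx \approx (xtx)^{\ell}$); and in Step 2 the two projections ${\bf v}(x,t_1,t_2) = xt_1xt_2$ and ${\bf v}(y,t_1,t_2) = t_1yt_2y$ do leave exactly the one degree of interleaving freedom you claim (the order of the second occurrence of $x$ and the first occurrence of $y$ inside the block between $t_1$ and $t_2$), yielding precisely the two listed words; the analogous bookkeeping for parts (ii) and (iii) checks out the same way, with the free choice sitting before $t_1$ in (ii) and after $t_2$ in (iii).

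One local slip worth fixing: after the counting step in your Step 1 there are four, not two, placements of the single $t_2$, namely $t_2xt_1x$, $xt_2t_1x$, $xt_1t_2x$ and $xt_1xt_2$. The two you omitted from the explicit list are exactly the ones eliminated by deleting $x$, which produces $t_1t_2 \approx t_2t_1$, i.e., commutativity of $S$, whence $xtx \approx xxt$ contradicts the isoterm hypothesis --- the very mechanism you describe in your closing paragraph. So the argument is complete in substance; just write the full four-case enumeration out so that the commutativity collapse is visibly applied to $t_2xt_1x$ and $xt_2t_1x$ (and to their analogues $yt_2t_1y$ and $xt_2t_1x$ in parts (ii) and (iii)).
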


\subsection{How to check that a monoid of the form $S(W)$ satisfies a balanced identity}

We use the word {\em substitution} to refer to the homomorphisms of the free semigroup and of the free monoid. Since every substitution $\Theta$ is uniquely determined by its values on the letters of the alphabet $\mathfrak A$, we write $\Theta: \mathfrak A \rightarrow \mathfrak A^+$ if $\Theta$ is a homomorphism of the free semigroup $\mathfrak A^+$ and we write $\Theta: \mathfrak A \rightarrow \mathfrak A^*$ if $\Theta$ is a homomorphism of the free monoid $\mathfrak A^*$.

If $\mathfrak X$ is a set of variables then we write ${\bf u}(\mathfrak X)$ to refer to the word obtained from ${\bf u}$ by deleting all occurrences of all variables that are not in $\mathfrak X$.
 If $\mathfrak X = \{y_1, \dots, y_k\} \cup \mathfrak Y$ for some variables $y_1, \dots, y_k$ and a set of variables $\mathfrak Y$ then instead of ${\bf u}(\{y_1, \dots, y_k\} \cup \mathfrak Y)$ we simply write ${\bf u}(y_1, \dots, y_k, \mathfrak Y)$.
We say that a pair of variables $\{x,y\}$ is {\em stable} in an identity $\bf u \approx \bf v$ if ${\bf u}(x,y)={\bf v}(x,y)$. Otherwise, we say that  $\{x,y\}$ is {\em unstable} in $\bf u \approx \bf v$. An identity ${\bf u} \approx {\bf v}$ is called {\em balanced} if every variable appears the same number of times in ${\bf u}$ and ${\bf v}$. If a semigroup $S$ satisfies all identities in a set $\Sigma$ then we write $S \models \Sigma$.

\begin{lemma} \label{nfbcombinations} \cite[Corollary 2.5]{OS3} Let $L=L^{\le}$ and $N$ be sets of words and ${\bf u} \approx {\bf v}$ be a balanced identity.
Let $W \subseteq L$ be such that $W^\preceq \cap N = \emptyset$.

Suppose that for every pair of variables $\{x, y\}$ unstable in ${\bf u} \approx {\bf v}$ and every substitution $\Theta: \mathfrak A \rightarrow \mathfrak A ^*$  such that $\Theta(x)$ contains some $a \in \mathfrak A$ and $\Theta(y)$ contains $b\ne a$, each of the following conditions is satisfied.

(i) If  $\Theta({\bf u}) \in L$ then $\Theta({\bf u}) \preceq {\bf n}$  for some ${\bf n} \in N$.

(ii) If  $\Theta({\bf v}) \in L$ then $\Theta({\bf v}) \preceq {\bf n}$  for some ${\bf n} \in N$.

Then $S(W) \models {\bf u} \approx {\bf v}$.

\end{lemma}

The following auxiliary statement illustrates how to use Lemma \ref{nfbcombinations}.

\begin{lemma} \label{axil}
 $S(\iso(\sigma_1)) \models xytxy \approx xytyx$ and $S(\iso(\sigma_2)) \models xytxy \approx yxtxy$.
\end{lemma}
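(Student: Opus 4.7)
The plan is to apply Lemma~\ref{nfbcombinations} to the balanced identity ${\bf u}\approx{\bf v}$ with ${\bf u}=xytxy$ and ${\bf v}=xytyx$, taking $L=\mathfrak A^*$ and $W=Max(\mathfrak A^*,\sigma_1)$. For the set $N$ I will use four non-isoterms of $S(W)$ generated by specializing the linear letters of $\sigma_1$ and of its copy obtained by renaming $x\leftrightarrow y$. Since $S(W)\models\sigma_1$, substituting $t_2\to 1$ gives $S(W)\models xyt_1xy\approx yxt_1xy$ and further $t_1\to 1$ gives $xyxy\approx yxxy$; renaming $x\leftrightarrow y$ in $\sigma_1$ and specializing the same way yields $xyt_1yx\approx yxt_1yx$ and $xyyx\approx yxyx$. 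Hence each member of $N=\{xyt_1xy,\,xyxy,\,xyt_1yx,\,xyyx\}$ fails to be an isoterm for $S(W)$, and by Proposition~\ref{pr1} we have $W\not\preceq{\bf n}$ for every ${\bf n}\in N$, as Lemma~\ref{nfbcombinations} demands.

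The identity is balanced, and its only unstable pair of variables is $\{x,y\}$ (the pairs $\{x,t\}$ and $\{y,t\}$ delete to the same word on both sides, while $xytxy(\{x,y\})=xyxy\ne xyyx=xytyx(\{x,y\})$). Fix a substitution $\Theta:\mathfrak A\to\mathfrak A^*$ with $\Theta(x)=\alpha$ containing some $a$ and $\Theta(y)=\beta$ containing some $b\ne a$, and write $\tau=\Theta(t)$. Both $\Theta({\bf u})=\alpha\beta\tau\alpha\beta$ and $\Theta({\bf v})=\alpha\beta\tau\beta\alpha$ lie in $L=\mathfrak A^*$ automatically, so conditions~(i) and~(ii) must be verified outright.

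For condition~(i), I will exhibit an ${\bf n}\in N$ and a substitution $\psi:\mathfrak A\to\mathfrak A^+$ with $\psi({\bf n})$ a subword of $\Theta({\bf u})$, which witnesses $\Theta({\bf u})\preceq{\bf n}$: if $\tau\ne\epsilon$ take ${\bf n}=xyt_1xy$ with $\psi(x)=\alpha,\psi(y)=\beta,\psi(t_1)=\tau$, so $\psi({\bf n})=\alpha\beta\tau\alpha\beta=\Theta({\bf u})$; if $\tau=\epsilon$ take ${\bf n}=xyxy$ with $\psi(x)=\alpha,\psi(y)=\beta$, giving $\psi({\bf n})=\alpha\beta\alpha\beta=\Theta({\bf u})$. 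Condition~(ii) is entirely parallel: use ${\bf n}=xyt_1yx$ with $\psi(t_1)=\tau$ when $\tau\ne\epsilon$, or ${\bf n}=xyyx$ when $\tau=\epsilon$, each time yielding $\psi({\bf n})=\Theta({\bf v})$. Lemma~\ref{nfbcombinations} then delivers $S(W)\models xytxy\approx xytyx$. The second claim follows by the same recipe with leading and trailing $xy$ swapped: $\sigma_2$ furnishes an analogous $N'$ (containing for instance $xyxy$, $xyyx$, $yxt_1xy$, and $yxxy$), and the same pattern-matching by $\psi$ handles both conditions for the identity $xytxy\approx yxtxy$.

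The main obstacle I anticipate is the case $\tau=\epsilon$: here $\Theta({\bf u})$ and $\Theta({\bf v})$ are too short to admit the ``full-word'' witnesses $xyt_1xy$ or $xyt_1yx$, so one is forced onto the shorter words $xyxy$ and $xyyx$, and justifying that these belong in $N$ is precisely what calls for the $x\leftrightarrow y$-renamed consequence of $\sigma_1$ on top of $\sigma_1$ itself; one should also check that the simple ``whole-word'' substitution $\psi$ really witnesses the isoterm property in the relevant $S(\Theta({\bf u}))$ and $S(\Theta({\bf v}))$, which amounts to the standard observation that a word in $W$ realized literally as $\psi({\bf n})$ cannot be identified in $S(W)$ with any word of a different shape.
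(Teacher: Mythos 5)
Your skeleton matches the paper's: you invoke Lemma~\ref{nfbcombinations} with $L=\mathfrak A^*$, you correctly identify $\{x,y\}$ as the only unstable pair in $xytxy\approx xytyx$, and your derivation that the words in your $N$ are non-isoterms for $S(Max(\mathfrak A^*,\sigma_1))$ (substituting the empty word for $t_2$, then $t_1$, in $\sigma_1$ and in its $x\leftrightarrow y$ renaming) is valid. The genuine gap is your verification of conditions (i)--(ii): you infer $\Theta({\bf u})\preceq {\bf n}$ from the existence of a substitution $\psi$ with $\psi({\bf n})$ a subword of $\Theta({\bf u})$. That inference is false: $\preceq$ is an implication between isoterm properties, not pattern containment, and the hypothesis of Lemma~\ref{nfbcombinations} (``$\Theta(x)$ contains $a$, $\Theta(y)$ contains $b\ne a$'') does not prevent $\Theta(x)$ and $\Theta(y)$ from commuting. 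Concretely, take $\Theta(x)=\Theta(y)=ab$ and $\Theta(t)$ empty; this $\Theta$ must be handled. Then $\Theta(xytyx)=(ab)^4=\psi(xyyx)$ with $\psi(x)=\psi(y)=ab$, so your recipe for condition (ii) asserts $(ab)^4\preceq xyyx$. This is false: $S(\{(ab)^4\})$ satisfies the non-trivial identity $xyyx\approx yxxy$, because a substitution makes either side a nonzero element of $S(\{(ab)^4\})$ only if it sends $x$ and $y$ to powers of $ab$, or both to powers of $ba$ (any doubled letter or foreign letter kills both sides), and in those cases the two sides are literally the same word. Hence $xyyx$ is not an isoterm for $S(\{(ab)^4\})$. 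For this particular $\Theta$ your $N$ happens to still work, since $(ab)^4\preceq xyxy$ does hold --- but establishing that is an isoterm computation, not a containment, which is exactly the argument your proof is missing; your closing appeal to ``the standard observation that a word realized literally as $\psi({\bf n})$ cannot be identified with a word of a different shape'' is not a standard fact and, as the example shows, is simply untrue under your stated side conditions.

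The paper sidesteps all of this by a different choice of $N$: it takes $N=\{xytxty,\,yxtxty\}$, the two sides of $\sigma_1$ itself (non-isoterms for free), and proves the $\preceq$-claims by a case split on $\Theta$ --- if one of $\Theta(x),\Theta(y)$ is not a power of a single letter then $\Theta(xytxy)\preceq xytxty$, and if $\Theta(x)=a^k$, $\Theta(y)=b^p$ then $\Theta(xytxy)\preceq abtatb$. Note that, e.g., $abab\preceq xytxty$ holds even though no substitution instance of $xytxty$ with nonempty values fits inside $abab$; so $\preceq$ and instance-containment are independent in both directions, and the paper's claims (unlike yours) are uniformly correct, including for commuting images, where $\Theta(x)$ is automatically not a power of a single letter. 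To repair your argument, either adopt the paper's $N$ and case analysis, or keep your $N$ but actually prove each $\preceq$-claim: for $\Theta$ with $\Theta(x)\Theta(y)\ne\Theta(y)\Theta(x)$ an argument in the spirit of Lemma~\ref{subSW1} gives what you want, while the commuting case, where $\Theta(xytxy)=\Theta(xytyx)$ is of the form $\rho^m\Theta(t)\rho^m$, needs separate treatment and, as shown above, a different choice of ${\bf n}$ than the one your recipe prescribes. The same repair is needed in your sketch of the dual claim for $\sigma_2$.
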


\begin{proof} Take $L=\mathfrak A^*$ and $N =\{xytxty, yxtxty\}$.
Notice that $\{x,y\}$ is the only unstable pair of variables in the identity $xy t xy \approx yx t xy$.
 Let $\Theta: \mathfrak A \rightarrow \mathfrak A ^*$  be a substitution such that $\Theta(x)$ contains some $a \in \mathfrak A$ and $\Theta(y)$ contains $b\ne a$.
 If $\Theta(x)$ is not a power of $a$ and $\Theta(y)$ is a power of $b$ then $\Theta({xytxy}) \preceq xytxty$ and $\Theta({yxtxy}) \preceq xytxty$. If $\Theta(x)=a^k$ for some $k>0$ and $\Theta(y)=b^p$ for some $p>0$ then $\Theta({xytxy}) \preceq abtatb$ and $\Theta({yxtxy}) \preceq batatb$.
 So, Lemma \ref{nfbcombinations} implies that $S(\iso(\sigma_1)) \models xytxy \approx xytyx$. Dually, $S(\iso(\sigma_2)) \models xytxy \approx yxtxy$.
\end{proof}

\section{Sufficiency of each condition  in Theorem \ref{main1}}\label{sec:fb}

\subsection{Condition (A) in Theorem \ref{main1}}

For each $n>0$ denote $\mathcal A_ n = \{t_1xt_2x \dots t_{n}x \approx x^{n}t_1t_2 \dots t_{n}, x^{n} \approx x^{n+1}\}$.
It is easy to see that $\iso(\mathcal A_{n+1})$ is the set of all $n$-limited words in $\mathfrak A^*$.
Corollary 3.2 in \cite{JS} implies that for each $k>0$, $\iso(\mathcal A_{k})$ is finitely based by $\mathcal A_{k}^\delta$. Using the fact that modulo renaming variables, every 2-limited word over  a two-letter alphabet is a subword of one of the words in   $\{a^2b^2, abab, abba\}$, it is easy to show that  $\iso(\mathcal A_{3}) = \{a^2b^2, abab, abba\}^{\preceq}$.
 Thus we have the following.

\begin{lemma} \label{nlimited}

(i)  $\{a^2b^2, abab, abba\}$ is finitely based by $\mathcal A_3^\delta = \{t_1xt_2xt_3x \approx x^3t_1t_2t_{3}, x^3 \approx x^4\}^\delta$.

(ii) Given a set of words  $W$ we have $W \sim \{a^2b^2, abab, abba\}$ if and only if every word in $W$ is 2-limited and $W \preceq \{a^2b^2, abab, abba\}$.

\end{lemma}

Part (i) of Lemma \ref{nlimited} confirms that Condition (A) in Theorem \ref{main1} is sufficient for a set of words $W$ to be FB and Part (ii) provides an algorithm for checking when $W$ satisfies Condition (A).

\subsection{Condition (B) in Theorem \ref{main1}}

Part (i) of the following lemma confirms that Condition (B) in Theorem \ref{main1} is sufficient for a set of words $W$ to be FB and Part (ii) provides an algorithm for checking when $W$ satisfies Condition (B).

\begin{lemma} \label{aabb}

(i) $\{a^2b^2\}$ is finitely based by $\mathcal A_3^\delta \cup \{\sigma_1, \sigma_2\}^\delta$.

(ii) For a set of 2-limited words $W$ we have $W\sim \{a^2b^2\}$ if and only if $W \preceq x^2y^2$ and neither $xytxty$ nor $xtytxy$ is  an isoterm for $S(W)$.

\end{lemma}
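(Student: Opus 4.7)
The plan for Part (i) begins with verifying $S(\{a^2b^2\}) \models \mathcal{A}_3 \cup \{\sigma_1, \sigma_2\}$. The set $\mathcal{A}_3$ is satisfied because $a^2b^2$ is $2$-limited, by Lemma \ref{nlimited}(iv). For $\sigma_1 = xytxty \approx yxtxty$, I would apply Lemma \ref{nfbcombinations} with $L = \{a^2b^2\}^c$ and $N = \emptyset$: the only unstable pair in $\sigma_1$ is $\{x, y\}$, and for any substitution $\Theta$ with $\Theta(x)$ containing some $a \in \mathfrak{A}$ and $\Theta(y)$ containing $b \neq a$, each of $\Theta(xytxty)$ and $\Theta(yxtxty)$ contains the letter $b$ (from the first $\Theta(y)$) occurring before the letter $a$ (from the second $\Theta(x)$); since every factor of $a^2b^2$ has all $a$'s preceding all $b$'s, neither image lies in $L$, so the hypotheses of Lemma \ref{nfbcombinations} hold vacuously. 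A dual argument gives $S(\{a^2b^2\}) \models \sigma_2$. Consequently, $\{a^2b^2\} \subseteq Max(\mathfrak{A}^*, \mathcal{A}_3 \cup \{\sigma_1, \sigma_2\})$, which yields $Max(\mathfrak{A}^*, \mathcal{A}_3 \cup \{\sigma_1, \sigma_2\}) \preceq \{a^2b^2\}$.

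The reverse relation $\{a^2b^2\} \preceq Max(\mathfrak{A}^*, \mathcal{A}_3 \cup \{\sigma_1, \sigma_2\})$ together with the finite basis claim amount to showing that every identity of $S(\{a^2b^2\})$ is a consequence of $\mathcal{A}_3 \cup \{\sigma_1, \sigma_2\}$. The plan is to restrict attention, via $\mathcal{A}_3$, to $2$-limited identities, and then to use the derived identities $xytxy \approx xytyx$ and $xytxy \approx yxtxy$ from Lemma \ref{axil}, together with $\sigma_1, \sigma_2$ themselves, to rewrite any $2$-limited word into a canonical form from which identities of $S(\{a^2b^2\})$ can be read off. The \textbf{main obstacle} is this completeness step: verifying that $\sigma_1, \sigma_2$ derivations (together with $\mathcal{A}_3$) capture every identity satisfied by $S(\{a^2b^2\})$. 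I expect to handle it by a syntactic case analysis on unstable pairs in the style of Lemma \ref{nfbcombinations}.

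For Part (ii), the forward direction is immediate: $W \sim \{aabb\}$ gives $W \preceq xxyy$, and since $S(W)$ and $S(\{aabb\})$ satisfy the same identities, Part (i) ensures $S(W) \models \sigma_1, \sigma_2$, so neither $xytxty$ nor $xtytxy$ is an isoterm for $S(W)$. For the backward direction, $W \preceq \{aabb\}$ holds directly. To conclude $\{aabb\} \preceq W$, it suffices to show $S(W) \models \mathcal{A}_3 \cup \{\sigma_1, \sigma_2\}$, because then $W \subseteq Max(\mathfrak{A}^*, \mathcal{A}_3 \cup \{\sigma_1, \sigma_2\})$, whence $Max \preceq W$, and Part (i)'s $\{aabb\} \sim Max$ gives $\{aabb\} \preceq W$. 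That $S(W) \models \mathcal{A}_3$ follows from $W$ being $2$-limited via Lemma \ref{nlimited}(iv). For $\sigma_1$ and $\sigma_2$, first observe that $xtx$ is an isoterm for $S(W)$: any non-trivial identity $xtx \approx xxt$ or $xtx \approx txx$ would, under $\Theta(x) = x, \Theta(t) = yy$ or $\Theta(x) = y, \Theta(t) = xx$, produce a non-trivial identity on $xxyy$, contradicting $xxyy$ being an isoterm for $S(W)$. Since $xtx$ is an isoterm and $xytxty$ (respectively $xtytxy$) is not, Fact \ref{xtx}(ii) (respectively Fact \ref{xtx}(iii)) forces $S(W) \models \sigma_1$ (respectively $\sigma_2$), completing the proof.
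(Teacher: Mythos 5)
The genuine gap is in Part (i): you never prove the completeness half, i.e.\ that \emph{every} identity of $S(\{a^2b^2\})$ is derivable from $\mathcal A_3 \cup \{\sigma_1, \sigma_2\}$ --- you explicitly defer it as ``the main obstacle'' and offer only a plan. That step is the entire content of the finite-basis claim (your satisfaction half only gives $\{a^2b^2\} \subseteq Max(\mathfrak A^*, \mathcal A_3 \cup \{\sigma_1,\sigma_2\})$, hence one direction of $\sim$). The paper closes this step by citation: ``easy to verify directly or using Theorem 4.4 in \cite{OS2}.'' Moreover, the tool you propose --- ``a syntactic case analysis on unstable pairs in the style of Lemma \ref{nfbcombinations}'' --- is the wrong instrument: Lemma \ref{nfbcombinations} is a device for showing that $S(W)$ \emph{satisfies} a given identity, not that an identity is \emph{derivable} from a finite set. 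The in-paper template for derivability is Lemma \ref{mainfb}, used exactly this way in Example \ref{exabc}: reduce via $\mathcal A_3$ to 2-limited identities; observe these are block-balanced since $xtx$, $xxt$, $txx$ are isoterms for $S(\{a^2b^2\})$; note that because $xxyy$ is an isoterm, every critical pair in such an identity ${\bf u} \approx {\bf v}$ has the form $\{{_{1{\bf u}}x}, {_{1{\bf u}}y}\}$ or $\{{_{2{\bf u}}x}, {_{2{\bf u}}y}\}$; then eliminate such pairs one at a time by applying $\sigma_1$ (respectively $\sigma_2$), decreasing $|\ch({\bf u} \approx {\bf v})|$. Without either that argument or the citation, Part (i) stands unproven.

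Two smaller points. First, in your verification of $\sigma_1$: since the letters $a,b$ in the hypothesis of Lemma \ref{nfbcombinations} are generic while $L = \{a^2b^2\}^c$ is built from the specific word $a^2b^2$, the observation ``the image contains $b$ before $a$'' does not by itself exclude membership in $L$ when the generic $b$ happens to be the specific letter $a$. What does work: the image of each side contains some occurrence of $\alpha$ before some occurrence of $\beta$ (from the pair $({_1}x, {_2}y)$) \emph{and} some $\beta$ before some $\alpha$ (from $({_1}y, {_2}x)$), with $\alpha \ne \beta$, which no factor $a^ib^j$ of $a^2b^2$ can accommodate; alternatively a length count ($|\Theta(xyt_1xt_2y)| \ge 4$ forces the image to be $a^2b^2$ itself, which has no such factorization). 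Second, in Part (ii) you assert that failure of $xtx$ to be an isoterm yields an identity $xtx \approx xxt$ or $xtx \approx txx$; this enumeration needs justification: one must first check (using that $xxyy$ is an isoterm for $S(W)$, e.g.\ by padding identities) that $x$, $xx$ and $xy$ are isoterms for $S(W)$, so that in any nontrivial identity $xtx \approx {\bf v}$ the word ${\bf v}$ has exactly two occurrences of $x$, one of $t$, and no other variables. Both points are easily patched, and your Part (ii) otherwise matches the paper's (terse) argument, which likewise combines Fact \ref{xtx}, Lemma \ref{nlimited} and Part (i).
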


\begin{proof}
Part (i) can be easily verified directly or using Theorem 4.4 in \cite{OS2}.

(ii)  Since $S(\{a^2b^2\})$ is finitely based by $\mathcal A_3^\delta \cup \{\sigma_1, \sigma_2\}^\delta$
we have \[\iso(\mathcal A_3 \cup \{\sigma_1, \sigma_2\}) = \{a^2b^2\}^{\preceq} .\]
Now if $W\sim \{a^2b^2\}$ then neither $xytxty$ nor $xtytxy$ is  an isoterm for $S(W)$ because  $xytxty$ is the left side of the identity
$\sigma_1$ and $xtytxy$ is the left side of the identity $\sigma_2$.

Conversely,  if $W \preceq x^2y^2$ and neither $xytxty$ nor $xtytxy$ is  an isoterm for $S(W)$ then $W \preceq xtx$.
Therefore,  $S(W) \models \{\sigma_1, \sigma_2\}$ by  Fact \ref{xtx}.
\end{proof}

\subsection{Condition (C)  in Theorem \ref{main1}}

We use $_{i{\bf u}}x$ to refer to the $i^{th}$ from the left occurrence of variable $x$ in a word ${\bf u}$. We use $_{last{\bf u}}x$ to refer to the last occurrence of $x$ in ${\bf u}$.
If the word $\bf u$ is clear from the context then we simply write $_{i}x$ or $_{last}x$.
Recall from the introduction that a set of words $W$ is called  hereditarily finitely based (HFB) if every monoid subvariety of $\var S(W)$ is finitely based. The following lemma provides a simple algorithm for recognizing HFB sets of words.

\begin{lemma} \label{hfb} \cite[Corollary 8.3]{OS3}
A set of words is HFB if and only if it is a subset of one of the following:

(i)  $\operatorname{Isot}(\sigma_1, \sigma_\mu)$ is the set of all words $\bf u \in \mathfrak A^*$ such that every adjacent pair of occurrences of two non-linear variables $x \ne y$ in ${\bf u}$ is of the form $\{{_{last{\bf u}}x}, {_{last{\bf u}}y} \}$;

(ii) $\operatorname{Isot}(\sigma_2, \sigma_\mu)$ is the set of all words $\bf u \in \mathfrak A^*$ such that every adjacent pair of occurrences of two non-linear variables $x \ne y$ in ${\bf u}$ is of the form $\{{_{1{\bf u}}x}, {_{1{\bf u}}y} \}$;

(iii) $\operatorname{Isot}(\sigma_1, \sigma_\mu, \sigma_2) = \operatorname{Isot}(\sigma_1, \sigma_\mu) \cap \operatorname{Isot}(\sigma_2, \sigma_\mu)$ is the set of all block-1-simple words.

\end{lemma}

The next lemma will be useful for proving the `only if' part of Theorem \ref{main1}.

\begin{lemma} \label{hfb1}
 A set of 2-limited words $W$ is HFB if and only if either $atbba \preceq W$ or  $abbta \preceq W$.
\end{lemma}

\begin{proof}  It is proved in \cite{LZL} that the word $atbba$ is finitely based by  $\mathcal A_3^\delta \cup \{\sigma_1, \sigma_\mu, xytxy \approx xy t yx \}^\delta$. Since by Lemma \ref{axil}, $S(\iso(\sigma_1)) \models xytxy \approx xytyx$, we have
\[\iso(\mathcal A_3 \cup \{\sigma_1, \sigma_\mu\}) = \{atbba\}^{\preceq},\] and dually
\[\iso(\mathcal A_3 \cup \{\sigma_2, \sigma_\mu\}) = \{abbta\}^{\preceq}.\]
In view of Lemma \ref{hfb}, a set of 2-limited words $W$ is HFB if and only if $W$ is a subset of $\iso(\mathcal A_3 \cup \{\sigma_1, \sigma_\mu\}) = \{atbba\}^{\preceq}$ or  $\iso(\mathcal A_3 \cup \{\sigma_2, \sigma_\mu\}) = \{abbta\}^{\preceq}$.
\end{proof}

It follows from Theorem 1.1 in \cite{EL} that every monoid which satisfies $\{\sigma_1, \sigma_\mu\}$ (or dually, $\{\sigma_2, \sigma_\mu\}$) is FB by its identities with at most two non-linear variables (see also Theorem 3.5 in \cite{OS1}). Therefore, the varieties
$\var S(\{atbba\})$ and $\var S(\{abbta\})$ have only finitely many subvarieties. In particular, modulo equational equivalence, there are only finitely many HFB sets of 2-limited words.

\subsection{Conditions (D) and (D$'$) in Theorem \ref{main1}}

 \begin{fact} \label{xy4}
 \cite[Fact 5.2]{OS2} $\{xytxy, xytyx\} \prec xyztxzy \sim yzxtzyx$.

\end{fact}

\begin{lemma} \label{abtab1} \cite[Theorem 5.3]{OS2} Let $S$ be a  monoid such that $S \models  \mathcal A_3 \cup \{ \sigma_{\mu}, yxxty  \approx xxyty\} = \Omega$. If the word $xyztxzy$ is an isoterm for $S$ then $S$ is finitely based by a subset of $\Omega^\delta \cup \{ytyxx \approx ytxxy, xxt \approx txx, xytxy \approx yxtyx, x^2 \approx x^3\}^\delta$.
\end{lemma}

If $W_1 \subseteq W_2$ are sets of words then we use $[W_1, W_2]$ to refer to the interval  between $\var S(W_1)$ and
$\var S(W_2)$ in the lattice of all semigroup varieties. Fact \ref{xy4},  Lemma \ref{abtab1} and the dual of Lemma \ref{abtab1} readily  imply the following.

\begin{cor} \label{D1} Every monoid in the interval $[\{abctacb\}, \{abtab, abtba, atbba\}]$ or dually, in the interval $[\{abctacb\}, \{abtab, abtba, abbta \}]$ is FB.
\end{cor}

Notice that  the intervals in Corollary \ref{D1} contain only finitely many monoid varieties, because in view of Lemma \ref{abtab1}, there are only finitely many possibilities for their identity bases. In view of  Fact \ref{xy4}, the interval  $[\{abtab, abtba\}, \{abtab, abtba, atbba\}]$ is a subinterval of $[\{abctacb\}, \{abtab, abtba, atbba\}]$ and dually,  the interval

$[\{abtab, abtba\}, \{abtab, abtba, abbta\}]$ is a subinterval of $[\{abctacb\}, \{abtab, abtba, abbta\}]$.
Thus Corollary \ref{D1} implies that Conditions (D) and (D$'$) in Theorem \ref{main1} are sufficient for a set of words $W$ to be FB.
The following lemma provides an algorithm for checking when $W$ satisfies Condition (D) or  (D$'$).

\begin{lemma}\label{abtab}
 A  set of 2-limited words $W$ belongs to one of the intervals

$[\{abtab, abtba\}, \{abtab, abtba, atbba\}]$ or $[\{abtab, abtba\}, \{abtab, abtba, abbta \}]$ if and only if $W \preceq \{xy t xy, xy t yx \}$, $W \not \preceq  xtxyty$ and  $W \not \preceq \{yxxty, ytxxy\}$.

\end{lemma}

\begin{proof}   Lemma \ref{abtab1} implies that the set $\{abtab, abtba, atbba\}$ is finitely based by $\mathcal A_3^\delta \cup \{\sigma_\mu, xxyty \approx yxxty\}^\delta$. Dually, the set $\{abtab, abtba, abbta\}$ is finitely based by $\mathcal A_3^\delta \cup \{\sigma_\mu, ytyxx \approx ytxxy\}^\delta$. Therefore, we have
 \begin{equation} \label{e1} \iso(\mathcal A_3 \cup \{\sigma_\mu, xxyty \approx yxxty\}) = \{abtab, abtba, atbba\}^{\preceq};\end{equation}
\[\iso(\mathcal A_3 \cup \{\sigma_\mu, ytyxx \approx ytxxy\}) = \{abtab, abtba, abbta\}^{\preceq}.\]

So, if $W$ belongs to one of these intervals then $xtxyty$ is not an isoterm for $S(W)$ because  $xtxyty$ is the left side of $\sigma_\mu$.
Also, one of the words $\{yxxty, ytxxy\}$ is not an isoterm for $S(W)$ because  $yxxty$ is the right side of  $xxyty \approx yxxty$ and
$ytxxy$ is the right side of  $ytyxx \approx ytxxy$.

Conversely,  let $W$ be a set of 2-limited words such that $W \preceq \{xy t xy, xy t yx \}$ but $W \not \preceq  xtxyty$ and  $W \not \preceq yxxty$. Since  $W \not \preceq xtxyty$,  Fact \ref{xtx} implies that $S(W) \models \sigma_\mu$.

\begin{claim}  $S(W) \models xxyty \approx yxxty$.\end{claim}

\begin{proof} We use Lemma  \ref{nfbcombinations} with $L = \mathfrak A^*$ and $N = \{  xtxyty,   yxxty \}$.
Notice that $\{x,y\}$ is the only unstable pair of variables in the identity $yxxty \approx xxyty$.  Let  $\Theta: \mathfrak A \rightarrow \mathfrak A^*$ be a substitution such that  $\Theta(x)$ contains some letter $a$ and $\Theta(y)$ contains $b\ne a$.
If the word $\Theta(x)$ is not a power of $a$, then we have  $\Theta(yxxty) \preceq xtxyty$ and  $\Theta(xxyty) \preceq xtxyty$.

So, we may assume that $\Theta(x)=a^k$ for some $k>0$.
Then $\Theta(yxxty)$ contains a subword $caa{\bf D}c$ for some variable $c \ne a$ and possibly empty word $\bf D$.
Consequently, $\Theta(yxxty) \preceq yxxty$. Let $d$ be the first letter in $\Theta(y)$ other than $a$. Then the word $\Theta(xxyty)$ contains a subword $(_ia) ({_1d})$ such that $i>1$. Therefore, we have $\Theta(xxyty) \preceq xtxyty$.
Lemma \ref{nfbcombinations} implies that $S(W) \models xxyty \approx yxxty$.
\end{proof}

Since every word in $W$ is 2-limited,  $S(W) \models \mathcal A_3$. Since $S(W) \models \mathcal A_3 \cup \{\sigma_\mu, xxyty \approx yxxty\}$, $S(W) \in \var S(\{abtab, abtba, atbba\})$ by \eqref{e1}.  Since $W \preceq \{xy t xy, xy t yx \}$, the set
 $W$ belongs to the interval $[\{abtab, abtba\}, \{abtab, abtba, atbba\}]$.

Assuming that  $W \not \preceq  ytxxy$ and using similar arguments we conclude that $W$ belongs to the interval $[\{abtab, abtba\}, \{abtab, abtba, abbta \}]$.
\end{proof}

\section{NFB intervals}

As in \cite{JS}, the words $x_1x_2 \dots x_n$ and $x_nx_{n-1}\dots x_1$ are denoted by $[Xn]$ and $[nX]$ respectively.
 We use ${\bf U}^t$ ($^t{\bf U}$) to denote the word obtained from  a word $\bf U$ by inserting a linear variable after (before) each occurrence of each variable in $\bf U$. For example, $[Zn]^t = z_1tz_2t \dots tz_nt$.

\begin{lemma}  \cite[Theorem 4.4]{OS1} \label{nfbsuf} For every monoid $S$ the following is true:

 (i) (Row 1 in Table 1 in \cite{OS1}) If the word $xyyx$ is an isoterm for $S$ and for each $n>1$, $S$ satisfies the identity  $xx[Yn][nY] \approx [Yn][nY]xx$, then $S$ is NFB;

(ii) (Row 2 in Table 1  in \cite{OS1}) If the words $\{yxxty, ytxxy\}$ are isoterms for $S$ and for each $n>1$, $S$ satisfies the identity ${\bf U}_n \approx {\bf V}_n$ in row 1 of Table \ref{classes}, then $S$ is NFB;

(iii) (Row 6 in Table 1 in \cite{OS1}) If the words $\{xtxyty, xytxy, xytyx\}$ are isoterms for $S$ and for each $n>1$, $S$ satisfies the identity ${\bf U}_n \approx {\bf V}_n$ in row 2 of Table \ref{classes}, then $S$ is NFB;

(iv) (Row 7 in Table 1 in \cite{OS1}) If the words $\{xtxyty, xyyx\}$ are isoterms for $S$ and for each $n>1$, $S$ satisfies the identity ${\bf U}_n \approx {\bf V}_n$ in row 3 of Table \ref{classes}, then $S$ is NFB.

\end{lemma}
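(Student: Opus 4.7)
My plan is essentially to observe that this lemma is a direct restatement of four specific rows of \cite[Theorem 4.4]{OS1}, so the proof reduces to verifying, in each of the four cases, that the hypotheses match and then invoking the cited result. Thus I would open the proof with a single sentence saying ``all four statements follow immediately from Theorem 4.4 of \cite{OS1} applied to rows 1 and 2 of Table~\ref{class} and rows 1, 2, 3 of Table~\ref{classes}, respectively.''

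To make the proof self-contained for the reader, I would then briefly recall the shape of that theorem. The NFB criterion in \cite{OS1} proceeds by exhibiting, for each candidate monoid $S$, an infinite sequence of identities $\mathbf{U}_n \approx \mathbf{V}_n$ that (a) are consequences of the identities of $S$ under the hypothesis that certain listed words (the ``set $I$'' column of each table) are isoterms for $S$, and (b) cannot be derived in $S$ from any finite subset of the identities of $S$ of bounded complexity, because doing so would force one of the words in the ``set $N$'' column to fail being an isoterm (producing a forbidden identity of ${\bf U}_n \approx {\bf V}_n$ form). The isoterm hypotheses in parts (i)--(iv) are exactly what guarantees the ``set $I$'' assumptions of the appropriate rows of the two tables, and the identity $\mathbf{U}_n \approx \mathbf{V}_n$ to be verified is taken verbatim from the corresponding row.

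In carrying this out, the only nontrivial verification is cosmetic: to check that the isoterm lists I am assuming line up exactly with the ones tabulated in \cite{OS1}. For part (i), the single isoterm $xyyx$ matches row 1 of Table~\ref{class}; for part (ii), the pair $\{yxxty, ytxxy\}$ matches row 1 of Table~\ref{classes}; for part (iii), the triple $\{xtxyty, xytxy, xytyx\}$ matches row 2 of Table~\ref{classes}; and for part (iv), the pair $\{xtxyty, xyyx\}$ matches row 3 of Table~\ref{classes}. Once these matchings are recorded, nothing further is needed.

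The main (and really only) obstacle is purely notational: ensuring consistency between the indexing conventions $[Xn]$, $[nX]$, $\mathbf{U}^t$, and the writing of the identities $\mathbf{U}_n\approx\mathbf{V}_n$ here with those of \cite{OS1}. The underlying combinatorial work—producing the test substitutions and the witness semigroups that show no finite subsystem of the $\{\mathbf{U}_n\approx\mathbf{V}_n\}$ implies the whole sequence—is already done in \cite{OS1} and does not need to be reproduced. Hence I expect the proof to occupy only a few lines of actual text.
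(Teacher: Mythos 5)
Your proposal is correct and coincides with what the paper does: Lemma~\ref{nfbsuf} is given no proof in the text and rests entirely on the row-by-row citations to \cite[Theorem 4.4]{OS1}, exactly as you describe. One small slip: your opening sentence invokes ``rows 1 and 2 of Table~\ref{class},'' but the lemma uses only row 1 of Table~\ref{class} (row 2, with isoterm $xytxy$, is instead handled by the paper's own Theorem~\ref{sufnfb}); your detailed matching in the third paragraph gets this right.
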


\begin{theorem} \label{xyyx} Let $W$ be a set of words such that $W \preceq xyyx$ but $W \not \preceq xtxyty$ then $W$ is NFB.
\end{theorem}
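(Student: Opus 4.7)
My approach is to invoke Lemma \ref{nfbsuf}(i), whose conclusion is exactly that the monoid is NFB. That lemma demands two properties of $S(W)$: (a) $xyyx$ is an isoterm for $S(W)$, and (b) $S(W) \models xx[Yn][nY] \approx [Yn][nY]xx$ for every $n > 1$. Property (a) is immediate from $W \preceq xyyx$ via Proposition \ref{pr1}, so the task reduces to establishing (b) for each $n>1$.

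For (b), I would apply Lemma \ref{nfbcombinations} with $L = \mathfrak A^*$ and $N = \{xtxyty\}$; the hypothesis $W \not\preceq xtxyty$ supplies $W \not\preceq {\bf n}$ for all ${\bf n} \in N$, and since $L$ is the entire free monoid, the containment conditions $\Theta({\bf u}), \Theta({\bf v}) \in L$ in the hypotheses of the lemma are automatic. A quick deletion check shows that the only unstable pairs in the balanced identity $xx[Yn][nY] \approx [Yn][nY]xx$ are $\{x, y_i\}$ for $i=1,\ldots,n$, since any pair $\{y_i,y_j\}$ deletes to the common palindrome $y_iy_jy_jy_i$ on both sides.

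Fix an unstable pair $\{x, y_i\}$ and a substitution $\Theta$ with $\Theta(x) \ni a$ and $\Theta(y_i) \ni b \ne a$. It remains to verify $\Theta({\bf U}_n) \preceq xtxyty$ and $\Theta({\bf V}_n) \preceq xtxyty$. The image $\Theta({\bf U}_n)$ contains the prefix $\Theta(x)\Theta(x)$ (yielding two occurrences of $a$) and the nested middle block $\Theta(y_i)\Theta(y_{i+1})\cdots\Theta(y_n)\Theta(y_n)\cdots\Theta(y_i)$ (yielding two occurrences of $b$ separated by the material contributed by $\Theta(y_j)$ for $j>i$). This ``two-$a$-then-two-$b$'' nested structure realizes the $xtxyty$ pattern in the $\preceq$ sense, in direct analogy with the case analysis of Lemma \ref{axil}. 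The symmetric argument handles $\Theta({\bf V}_n)$, whose suffix is $\Theta(x)\Theta(x)$ and whose middle carries the same nested $y_i$'s.

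The principal technical obstacle lies in this final verification, particularly the edge cases where $\Theta(x)$ or $\Theta(y_i)$ is a single letter so that the two occurrences of $a$ (or $b$) are adjacent in their block and no separator is available for a direct positional embedding; these cases require instead an application of Fact \ref{xtx}(i) to the surrounding structure to conclude $\preceq$. Once all these reductions are established, Lemma \ref{nfbcombinations} yields $S(W) \models {\bf U}_n \approx {\bf V}_n$ for every $n > 1$, and Lemma \ref{nfbsuf}(i) concludes that $W$ is NFB.
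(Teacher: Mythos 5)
Your proof has the same skeleton as the paper's (Lemma \ref{nfbcombinations} with $L=\mathfrak A^*$, $N=\{xtxyty\}$, unstable pairs $\{x,y_i\}$, then Lemma \ref{nfbsuf}(i)), and the reductions up to the final verification are fine. But the central step --- verifying $\Theta({\bf U}_n) \preceq xtxyty$ and $\Theta({\bf V}_n) \preceq xtxyty$ --- is where your argument breaks down. The principle you invoke, that a ``two-$a$-then-two-$b$ nested structure'' realizes $xtxyty$ in the $\preceq$ sense, is false in general: the word ${\bf w} = a t_1 a t_2 b t_3 b$ contains two occurrences of $a$ followed by two occurrences of $b$, yet $S(\{{\bf w}\})$ satisfies $\sigma_\mu = xt_1xyt_2y \approx xt_1yxt_2y$ (if $\Theta(x)$ or $\Theta(y)$ is empty the two sides coincide literally; if both are non-empty, neither side can land in the set of subwords of ${\bf w}$, since that forces $\{\Theta(x),\Theta(y)\}=\{a,b\}$ and ${\bf w}$ contains no factor $ab$ or $ba$), so $xtxyty$ is not an isoterm for $S(\{{\bf w}\})$ and ${\bf w}\not\preceq xtxyty$ by Proposition \ref{pr1}. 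By Lemma \ref{firstsim}(i), the relation ${\bf W}\preceq xtxyty$ is governed by \emph{adjacent} pairs of occurrences of non-linear variables, not by scattered occurrence patterns. Your ``edge case'' analysis has the difficulty exactly inverted: adjacency of the two occurrences is harmless rather than problematic --- e.g.\ $aabb \preceq xtxyty$, since its unique adjacent pair $({_2}a)({_1}b)$ is neither of the form $\{{_1}x,{_1}y\}$ nor $\{{_{last}}x,{_{last}}y\}$ (this is also why $\{a^2b^2\}$ sits in Case 2 of Theorem \ref{main}) --- whereas interposed \emph{linear} separators are what can destroy the relation, as ${\bf w}$ above shows. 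Moreover, Fact \ref{xtx}(i) is a statement about monoids for which $xtx$ is an isoterm; it is not a tool for certifying ${\bf W}\preceq xtxyty$ for a concrete word ${\bf W}$, and you give no actual argument there.

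The idea you are missing is precisely the paper's key observation: every variable of ${\bf U}_n = xx[Yn][nY]$ occurs exactly twice, hence \emph{every letter of $\Theta({\bf U}_n)$ is non-linear} --- no linear separators can occur in the image at all. With this, plus the presence of two distinct letters $a$ and $b$ and the shape $\Theta(x)\Theta(x)\Theta(y_1)\cdots\Theta(y_n)\Theta(y_n)\cdots\Theta(y_1)$, one exhibits an adjacent pair of occurrences of two distinct (necessarily non-linear) letters which is neither of the form (first, first) nor (last, last) --- the paper points to a non-first occurrence $({_j}a)$, $j>1$, immediately followed by a first occurrence $({_1}c)$, $c\ne a$ --- and this yields $\Theta({\bf U}_n)\preceq xtxyty$ by the characterization underlying Lemma \ref{firstsim}(i); the argument for $\Theta({\bf V}_n)$ is symmetric. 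Without the all-letters-non-linear observation and a first/last analysis of adjacent occurrences, the hypotheses of Lemma \ref{nfbcombinations} are not established, so your proposal has a genuine gap at its decisive step, even though the statement you need is true.
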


\begin{proof} First, let us show that $S \models  xx[Yn][nY] \approx [Yn][nY]xx$.
To this aim we use Lemma \ref{nfbcombinations} with $L=\mathfrak A^*$ and $N =\{xtxyty\}$.

 Fix some $n> 1$. The only unstable pairs of variables  in $xx[Yn][nY] \approx [Yn][nY]xx$ are  $\{x,y_i\}$, $i=1, \dots, n$. Fix some  $1 \le i \le n$ and a substitution $\Theta: \mathfrak A \rightarrow \mathfrak A^*$ such that $\Theta(x)$ contains some letter $a$ and $\Theta(y_i)$ contains $b\ne a$.
Since $xx[Yn][nY]$ contains only non-linear variables, $\Theta(xx[Yn][nY])$ contains only non-linear variables.
Then  $\Theta(xx[Yn][nY])$ contains a subword $(_ja) ({_1c})$ for some non-linear variable $c \ne a$ and some $j>1$.
 Consequently, we have  $\Theta(xx[Yn][nY]) \preceq xtxyty$.
 By symmetric arguments, we have  $\Theta([Yn][nY]xx) \preceq xtxyty$.

  Lemma \ref{nfbcombinations} implies that for each $n>1$, the monoid $S(W)$ satisfies the identity  $xx[Yn][nY] \approx [Yn][nY]xx$. Therefore, $S(W)$ is NFB by Lemma \ref{nfbsuf}(i).
\end{proof}

For each $n>0$ we use $A_n$ to denote the set which contains $xt_1xt_2 \dots t_{n-1} x$ and all those words which are obtained
from  $xt_1xt_2 \dots t_{n-1} x$ by deleting some of the variables in $\{t_1, \dots, t_{n-1}\}$. For example, $A_3 = \{xt_1xt_2x, xxt_2x, xt_1xx, x^3\}$. The following fact can be easily verified and will be used without a reference.

\begin{fact} For $k>0$ and a set of words $W$ the following are equivalent:

(i) each word in $W$ is $k$-limited;

(ii) $S(W) \models \mathcal A_{k+1}$;

(iii) no word in $A_{k+1}$ is an isoterm for $S(W)$.

\end{fact}

\begin{theorem} \label{t1} Take sets of words $I$ and $N$ from one of the four rows in  Table \ref{classes}.
Let $W$ be a set of block-2-simple words such that $W \preceq I$ but $W \not \preceq {\bf n}$ for any ${\bf n} \in N$. Then
$W$ is NFB.
\end{theorem}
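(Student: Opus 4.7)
The plan is to adapt the argument of Theorem \ref{xyyx} to each of the four rows of Table \ref{classes}. Fix a row, with isoterm set $I$, exclusion set $N$, and identity family $\{{\bf U}_n \approx {\bf V}_n\}_{n>1}$. The hypothesis $W \preceq I$ supplies the isoterm premise of the matching part of Lemma \ref{nfbsuf} (parts (ii), (iii), (iv) correspond to rows 1, 2, 3, and an analogous statement from \cite{OS1} handles row 4), so it suffices to show that $S(W) \models {\bf U}_n \approx {\bf V}_n$ for every $n > 1$. I will obtain this from Lemma \ref{nfbcombinations} with $L = B2$, the set of block-2-simple words, and $N$ as given; note that $L = L^c$, since blocks of a subword are subwords of blocks of the original.

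The substantive work is to verify the substitution condition in Lemma \ref{nfbcombinations}. For each row I first identify the unstable pairs of variables in ${\bf U}_n \approx {\bf V}_n$: in rows 1, 2 and 4 only $\{x, y\}$ is unstable, whereas in row 3 every pair $\{x_i, y_j\}$ is, with a symmetry in $i$ and $j$. Fix an unstable pair $\{x_0, y_0\}$ and a substitution $\Theta$ with $a \in \Theta(x_0)$ and $b \in \Theta(y_0)$, $a \ne b$. If both $\Theta(x_0) = a^k$ and $\Theta(y_0) = b^m$ are single-letter powers, then $\Theta({\bf U}_n)$ deletes to a short word that is $\preceq$-below an element of $N$: for row 1 the residue matches $xy^m x$ or $xtxyty$; for row 2, $xyxy$ or a member of $A_4$; for row 3, $xxyy$ or a member of $A_4$; for row 4, $xy^m x$ or $xyxyx$. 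Otherwise one of $\Theta(x_0)$, $\Theta(y_0)$ contains two distinct letters; since $x_0$ and $y_0$ each occur at least twice in ${\bf U}_n$, every letter appearing in $\Theta(x_0) \cup \Theta(y_0)$ is non-linear in $\Theta({\bf U}_n)$, and their adjacency in ${\bf U}_n$ (a factor such as $yxx$ in row 1, $xy$ in rows 2 and 4, or the transition from $[nX]$ to $[Yn]$ in row 3) forces three distinct non-linear letters into a single block of $\Theta({\bf U}_n)$; hence $\Theta({\bf U}_n) \notin B2$ and the condition is vacuously satisfied. The argument for $\Theta({\bf V}_n)$ is symmetric.

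The main obstacle is this block-structure bookkeeping in the multi-letter case, especially when letters of $\Theta(x_0)$, $\Theta(y_0)$ and the neighbouring $\Theta(z_i)$'s partially coincide; one must exhibit, row by row, an explicit block of $\Theta({\bf U}_n)$ with three distinct non-linear letters, which occasionally requires looking beyond the immediate juxtaposition of $\Theta(x_0)$ and $\Theta(y_0)$. Row 3 is the most intricate on account of its many unstable pairs, and row 4 requires an analysis closely parallel to row 2 but with the interior factor $[An]$ in place of $[Zn]$. Once the substitution condition is verified, Lemma \ref{nfbcombinations} gives $S(W) \models {\bf U}_n \approx {\bf V}_n$ for every $n > 1$, and the matching part of Lemma \ref{nfbsuf} concludes the proof.
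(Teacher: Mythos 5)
Your overall architecture matches the paper's: apply Lemma \ref{nfbcombinations} with $L$ the set of block-2-simple words (closed under subwords) to show $S(W) \models {\bf U}_n \approx {\bf V}_n$ for all $n>1$, then invoke Lemma \ref{nfbsuf}. But your central case split is wrong, and it is wrong at exactly the point you flag as ``the main obstacle.'' You dichotomize on whether $\Theta(x_0)$ and $\Theta(y_0)$ are both powers of single letters, claiming that otherwise the adjacency in ${\bf U}_n$ forces three distinct non-linear letters into one block, so $\Theta({\bf U}_n)$ is not block-2-simple and the condition of Lemma \ref{nfbcombinations} holds vacuously. This fails whenever the two values share letters: in row 2 take $\Theta(x)=ab$, $\Theta(y)=b$ and $\Theta(z_i)=1$ for all $i$; this satisfies the hypothesis ($a \in \Theta(x)$, $b \in \Theta(y)$, $a \ne b$, and $\Theta(x)$ is not a power of a single letter), yet $\Theta(xy[Zn]xyt[nZ]) = abbabb\,\Theta(t)$ is block-2-simple --- only two distinct non-linear letters occur in the whole word, so no block with three letters exists to be ``exhibited.'' The condition is therefore not vacuous, and one must show this image is $\preceq$ some member of $N$; the reason it is (here $b$ occurs four times, so the word lies below a member of $A_4$) is an occurrence-count argument, not a block-structure one. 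This is precisely why the paper's verification splits by occurrence counts --- ``contains a more than 3-occurring variable, hence $\preceq$ some ${\bf u}\in A_4$'' versus ``3-limited and block-2-simple,'' with the latter alternative pinning the substitution down to $\Theta(x)=a$, $\Theta(y)=b$ and everything else empty. Note also that in rows 1 and 4 the set $N$ contains no $A_4$ words, so the shared-letter and higher-power cases there must instead be absorbed by subwords of the form $ab^ma$, giving $\preceq xy^mx$; your sketch supplies neither this nor any substitute.

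A second, smaller gap concerns row 4. Lemma \ref{nfbsuf} has no part matching that row, and your appeal to ``an analogous statement from \cite{OS1}'' is unsubstantiated. The paper does not run the Lemma \ref{nfbcombinations} machinery for row 4 at all: it cites the full statement, isoterms and identities together, from Theorem 4.4 (row 6 in Table 1) of \cite{OS3}. If you insist on a direct argument parallel to row 2, you would additionally need an NFB criterion for the identities $xy[An]yxt[nA] \approx yx[An]xyt[nA]$ with $N=\{xyxyx\}\cup\{xy^mx \mid m>1\}$, which this paper never states and which cannot be conjured by analogy, especially since, as noted, $A_4$ is unavailable in that row. In short: the skeleton (Lemma \ref{nfbcombinations} plus Lemma \ref{nfbsuf}) is the paper's, but the substitution analysis --- the only substantive content --- rests on a false dichotomy and must be redone by occurrence counts.
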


\begin{proof} Each time we use Lemma \ref{nfbcombinations} we take $L$ to be the set of all block-2-simple words. Evidently, this set of words is closed under taking subwords.

 {\bf Row 1 in Table \ref{classes}.}  Here $I = \{yxxty, ytxxy\}$ and $N = \{xtxyty\} \cup \{xy^mx \mid m>1\}$.

Fix some $n>1$. The only unstable pair  of variables  in $[Zn]^tyxx[Zn]y \approx [Zn]^t xxy [Zn]y$ is $\{x,y\}$.  Let $\Theta: \mathfrak A \rightarrow \mathfrak A^*$ be a substitution such that $\Theta(x)$ contains some letter $a$ and $\Theta(y)$ contains $b\ne a$.
If $\Theta(x)$ is not a power of $x$ then  $\Theta([Zn]^tyxx[Zn]y) \preceq xtxyty$ and  $\Theta([Zn]^t xxy [Zn]y) \preceq xtxyty$.
So, we may assume that $\Theta(x)$ is a power of $x$.

If  $\Theta([Zn]^tyxx[Zn]y)$ is a block-2-simple word then it contains a subword $ba^mb$ for
some $m>1$. Therefore, we have  $\Theta([Zn]^tyxx[Zn]y) \preceq xy^mx$ for some $m>1$.
If $\Theta([Zn]^t xxy [Zn]y)$ is a block-2-simple word then it contains a subword $({_ia}) ({_jb})$ where ${_ia}$ is not the first occurrence of $a$ and ${_jb}$ is not the last occurrence of $b$.
Consequently, we have  $\Theta([Zn]^t xxy [Zn]y) \preceq xtxyty$.

Lemma \ref{nfbcombinations} implies that for each $n>1$, the monoid $S(W)$ satisfies the identity ${\bf U}_n \approx {\bf V}_n$ in Row 1 of  Table \ref{classes}. Therefore, $S(W)$ is NFB by Lemma \ref{nfbsuf}(ii).

{\bf Row 2 in Table \ref{classes}.}  Here $I = \{xtxyty, xytxy, xytyx\}$ and $N = A_4 \cup \{xyxy\}$.

 Fix some $n>1$. The only unstable pair of variables  in $xy[Zn]xyt[nZ] \approx yx[Zn]yxt[nZ]$ is $\{x,y\}$.
  Let $\Theta: \mathfrak A \rightarrow \mathfrak A^*$ be a substitution such that $\Theta(x)$ contains some letter $a$ and $\Theta(y)$ contains letter $b\ne a$.
 If some variable occurs in $\Theta(xy[Zn]xyt[nZ])$ more than three times, then $\Theta(xy[Zn]xyt[nZ]) \preceq {\bf u}$ for some ${\bf u} \in A_4$.
 If $\Theta(xy[Zn]xyt[nZ])$ is a 3-limited block-2-simple word then $\Theta(x)=a$, $\Theta(y)=b$ and $\Theta(([Zn])$ is the empty word. Therefore, $\Theta(xy[Zn]xyt[nZ]) \preceq xyxy$.

Lemma \ref{nfbcombinations} implies that for each $n>1$, the monoid $S(W)$ satisfies the identity ${\bf U}_n \approx {\bf V}_n$ in Row 2 of  Table \ref{classes}. Therefore, $S(W)$ is NFB by Lemma \ref{nfbsuf}(iii).

{\bf Row 3 in Table \ref{classes}.}  Here $I = \{xtxyty, xyyx\}$ and $N = A_4 \cup \{xxyy\}$.

 Fix some $n>1$. Each unstable pairs of variables  in $[Xn][nX][Yn][nY] \approx [Yn][nY][Xn][nX]$ is of the form $\{x_i,y_j\}$ for some $1 \le i,j \le n$.
  Let $\Theta: \mathfrak A \rightarrow \mathfrak A^*$ be a substitution such that
 $\Theta(x)$ contains some letter $a$ and $\Theta(y)$ contains letter $b\ne a$.  If some variable occurs in $\Theta([Xn][nX][Yn][nY])$ more than three times then $\Theta([Xn][nX][Yn][nY]) \preceq {\bf u}$ for some ${\bf u} \in A_4$.
 If $\Theta([Xn][nX][Yn][nY])$ is a 3-limited block-2-simple word then
 $\Theta(x)=a$, $\Theta(y)=b$ and  the value of $\Theta$ on all other letters  the empty word. Therefore,  $\Theta([Xn][nX][Yn][nY]) \preceq xxyy$.

Lemma \ref{nfbcombinations} implies that for each $n>1$, the monoid $S(W)$ satisfies the identity ${\bf U}_n \approx {\bf V}_n$ in Row 3 of  Table \ref{classes}. Therefore, $S(W)$ is NFB by Lemma \ref{nfbsuf}(iv).

{\bf Row 4 in Table \ref{classes}.}  Here $I = \{xtxyty, xytxy, xytyx\}$ and $N = \{xyxyx\} \cup \{xy^mx \mid m>1\}$ and
this is proved in Theorem 4.4 (row 6 in Table 1) in \cite{OS3}.
\end{proof}

\begin{table}[tbh]
\begin{center}
\small
\begin{tabular}{|l|l|l|}
\hline set $I$ &identity  ${\bf U}_n \approx {\bf V}_n$ for $n>1$ & set $N$\\
\hline

\protect\rule{0pt}{10pt}$yxxty$, $ytxxy $&$[Zn]^t \hskip .02in yxx[Zn]y \approx [Zn]^t \hskip .02in xxy[Zn]y $& $\{xtxyty\} \cup \{xy^mx| m>1\}$ \\
\hline

\protect\rule{0pt}{10pt}$xtxyty$, $xytxy$, $xytyx$    & $xy[Zn]xyt[nZ] \approx yx[Zn]yxt[nZ]$  &  $A_4 \cup \{xyxy\}$ \\
\hline

\protect\rule{0pt}{10pt}$xtxyty$, $xyyx$    & $[Xn][nX][Yn][nY] \approx [Yn][nY][Xn][nX]$  &  $A_4 \cup \{xxyy\}$\\
\hline

\protect\rule{0pt}{10pt}$xtxyty$, $xytxy$, $xytyx$ & $xy[An]yxt[nA]  \approx yx[An]xyt[nA]$&$xyxyx, \{xy^mx| m>1\}$ \\
\hline

\end{tabular}
\caption{Four NFB intervals $[I, \iso(L, \Sigma)]$\protect\rule{0pt}
{11pt}}

\label{classes}
\end{center}
\end{table}

\begin{lemma} \label{fb2} \cite[Corollary 4.5]{OS3} Let $W$ be a set of block-2-simple words such that $W\preceq \{xytxty, xtytxy\}$.
Then either $W$ is NFB or $W \preceq \{xytxy, xytyx\}$.
\end{lemma}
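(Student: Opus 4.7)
The plan is to prove the contrapositive: assume $W$ is a block-2-simple set with $W \preceq \{xytxty, xtytxy\}$ but $W \not \preceq \{xytxy, xytyx\}$; we show $W$ is NFB. By the reverse-and-swap duality exchanging $xytxty$ with $xtytxy$, I may treat the case $W \not\preceq xytxy$; the case $W \not\preceq xytyx$ is handled symmetrically, using $xtytxy$ in the role of $xytxty$. So $S(W) \models xytxy \approx v$ for some $v \neq xytxy$.

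I then split on whether $xtxyty$ is an isoterm for $S(W)$. In either subcase, the plan is first to establish $W \preceq xyyx$, and then to conclude $W$ is NFB via Theorem \ref{xyyx} (if $W \not \preceq xtxyty$) or via Theorem \ref{t1} applied to Row 3 of Table \ref{classes} (if $W \preceq xtxyty$, after checking $W \not \preceq {\bf n}$ for every ${\bf n} \in A_4 \cup \{xxyy\}$, which will use the three established isoterms $xytxty, xtytxy, xtxyty$ plus $xyyx$, together with the failure $W \not\preceq xytxy$, to rule each ${\bf n}$ out).

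To establish $W \preceq xyyx$: suppose $S(W) \models xyyx \approx w$ for some $w \ne xyyx$. I would apply Lemma \ref{nfbcombinations} with $L$ the set of all block-2-simple words, examining substitutions $\Theta$ such that $\Theta(xyyx)$ and $\Theta(w)$ remain block-2-simple. Because the words in $W$ are block-2-simple, these substitutions are restricted enough that the resulting identity reduces (via the equivalences of Fact \ref{xy3}(i)) to $\sigma_1$ or $\sigma_2$ acting on the given isoterms $xytxty$ or $xtytxy$, contradicting $W \preceq \{xytxty, xtytxy\}$.

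The principal obstacle is precisely this transfer of isoterm status from the given pair $\{xytxty, xtytxy\}$ to the auxiliary word $xyyx$ (and to the other subsidiary words appearing in the Row 3 check). Isoterm-ness for monoids is not automatically preserved under variable deletion --- a monoid can satisfy an identity such as $xytxy \approx yxtxy$ without satisfying the corresponding $\sigma_1$, so $xytxty$ can be an isoterm while $xyxy$ fails to be one. The block-2-simplicity hypothesis is indispensable: it confines the substitutions that Lemma \ref{nfbcombinations} must consider to those whose images remain block-2-simple, so that every alternative identity must collapse through the $\sim$-equivalences of Fact \ref{xy3} back to $\sigma_1$ or $\sigma_2$, giving the contradiction needed to secure the auxiliary isoterm.
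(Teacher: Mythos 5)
The paper does not actually prove this lemma: it is imported verbatim from \cite[Corollary 4.5]{OS3}, whose proof runs through a dedicated NFB sufficient condition keyed to the isoterm pair $\{xytxty, xtytxy\}$ (from Theorem 4.4 of \cite{OS3}, whose table is only partially reproduced here as Lemma \ref{nfbsuf}). Your plan is therefore necessarily a reconstruction, and it has two genuine gaps. The first is the duality reduction: word reversal does exchange $xytxty$ and $xtytxy$, but it fixes both $xytxy$ (its reverse $yxtyx$ is $xytxy$ after renaming) and the palindrome $xytyx$. So the case $W \not\preceq xytyx$ is \emph{not} the mirror of the case $W \not\preceq xytxy$, and the asymmetry is real: Corollary \ref{xytxy} shows that ``$xytxy$ an isoterm, $xytyx$ not'' forces NFB for arbitrary $W$, while Example \ref{exabc} exhibits an FB set with $xytyx$ an isoterm and $xytxy$ not. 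The two subcases require genuinely different arguments.

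The fatal gap is your central auxiliary claim that $W \preceq xyyx$ can be ``established'' from the hypotheses: it is false. Take $W = \{xyt_1xt_2y,\; xt_1yt_2xy\}$ itself. This set is block-2-simple (even 2-limited), and both $xytxty$ and $xtytxy$ belong to $W^c$, hence are isoterms for $S(W)$. But neither word contains a square factor nor two disjoint occurrences of any factor involving two distinct variables, so no substitution $\Theta$ with $\Theta(x)\Theta(y) \ne \Theta(y)\Theta(x)$ sends any of $xyyx$, $yxxy$, $xytxy$, $xytyx$ into $W^c$; by Lemma \ref{subSW1}, $S(W)$ satisfies $xyyx \approx yxxy$ and $xytxy \approx xytyx$. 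Thus $W$ satisfies your standing assumptions ($W \preceq \{xytxty, xtytxy\}$, $W \not\preceq xytxy$), yet $xyyx$ is not an isoterm --- no argument could transfer isoterm status as you hope, and in any case Lemma \ref{nfbcombinations} is the wrong instrument: it certifies that $S(W)$ \emph{satisfies} identities (to feed an NFB criterion), and cannot prove that a word \emph{is} an isoterm. For this $W$ --- precisely a set the lemma must declare NFB --- your plan terminates nowhere: Theorem \ref{xyyx} and Row 3 of Table \ref{classes} need $xyyx$, while Rows 2 and 4 and Corollary \ref{xytxy} need $xytxy$, all unavailable. A further independent defect: even where Row 3 would apply, the check $W \not\preceq \mathbf{n}$ for $\mathbf{n} \in A_4$ cannot be made from your hypotheses, since the lemma does not assume $W$ is 2-limited (adjoin $a^4$ to the $W$ above: all hypotheses persist, but $x^4 \in A_4$ becomes an isoterm). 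What is missing is exactly the ingredient \cite{OS3} supplies and this paper does not restate: an NFB sufficient condition whose isoterm hypothesis is $\{xytxty, xtytxy\}$ itself, verified over the class of block-2-simple words with $N$ built from $\{xytxy, xytyx\}$.
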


\begin{lemma} \label{fb1l}  \cite[Corollary 4.7]{OS3} Let $W$ be a set of block-2-simple words such that $W \preceq xtxyty$ but one of the words $\{xytxty, xtytxy\}$ is not an isoterm for $S(W)$. Then either $W$ is NFB or $W \preceq xxyy$ and both words $\{xytxty, xtytxy\}$ are not isoterms for $S(W)$.
\end{lemma}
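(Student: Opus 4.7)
The plan is to reduce the hypothesis using Fact \ref{xtx} and then case-analyze how many of the identities $\sigma_1, \sigma_2$ are satisfied by $S(W)$. Since $xtx$ is a contiguous subword of the isoterm $xtxyty$ and is therefore itself an isoterm for $S(W)$, Fact \ref{xtx} applies: part (i) combined with $W \preceq xtxyty$ forces $S(W) \not\models \sigma_\mu$, and parts (ii), (iii) translate ``$xytxty$ not an isoterm'' and ``$xtytxy$ not an isoterm'' into $S(W) \models \sigma_1$ and $S(W) \models \sigma_2$ respectively. The hypothesis therefore guarantees that at least one of $\sigma_1, \sigma_2$ holds in $S(W)$.

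In the first subcase, exactly one of $\sigma_1, \sigma_2$ holds; by symmetry assume $S(W) \models \sigma_1$ and $S(W) \not\models \sigma_2$, so $xtytxy$ is an isoterm while $xytxty$ is not. Lemma \ref{axil} supplies $S(W) \models xytxy \approx xytyx$, and substituting the linear variable by the empty word (valid in a monoid) yields $xyxy \approx xyyx$, while substituting both linear variables of $\sigma_1$ by the empty word gives $xyxy \approx yxxy$. To derive NFB I would invoke an NFB criterion in the style of Lemma \ref{nfbsuf} whose isoterm set contains $\{xtxyty, xtytxy\}$ and whose non-isoterm set contains the words forced to collapse here. The block-2-simple hypothesis is critical so that the substitutions in Lemma \ref{nfbcombinations} stay inside $L$, forcing the required infinite family ${\bf U}_n \approx {\bf V}_n$ to hold in $S(W)$.

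In the second subcase both $\sigma_1$ and $\sigma_2$ hold, so both $xytxty$ and $xtytxy$ fail to be isoterms. Lemma \ref{axil} and its dual give $S(W) \models xytxy \approx xytyx$ and $xytxy \approx yxtxy$, and setting the linear variable to the empty word produces $xyxy \approx yxxy \approx xyyx$. If $xxyy$ is an isoterm for $S(W)$ --- i.e.\ $W \preceq xxyy$ --- then combined with the two non-isotermness conclusions, the second alternative of the lemma is achieved. Otherwise $S(W) \models xxyy \approx {\bf v}$ for some ${\bf v} \ne xxyy$, and I would apply Theorem \ref{t1} with an appropriate row of Table \ref{classes}, or a companion row from the full Table 1 of \cite{OS1}, matched to this further collapsed configuration.

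The main obstacle is pinpointing the correct NFB criterion in each subcase: the four rows reproduced in Table \ref{classes} do not fit directly, since $xytxy$, $xytyx$, and $xyyx$ are forced to be non-isoterms whenever $\sigma_1$ holds (ruling out rows 2--4) and $xtxyty$ is an isoterm (ruling out row 1). The argument therefore likely appeals either to an additional row of Table 1 in \cite{OS1} not reproduced in Lemma \ref{nfbsuf}, or to a direct application of Lemma \ref{nfbcombinations} with a carefully chosen pair $(L, N)$; once that choice is made, the block-2-simple hypothesis keeps the substitution images inside $L$, making the verification routine.
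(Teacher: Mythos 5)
Your opening reduction is sound and is surely the right skeleton: $xtxyty$ being an isoterm forces $xtx$ to be one (though the correct justification is closure of isoterms under \emph{deleting variables} --- $xtxyty(\{x,t_1\})=xtx$ --- exactly as the paper does for $xytxy \Rightarrow xtx$ in the proof of Theorem \ref{sufnfb}; ``contiguous subword'' is not by itself a licensed inference here, cf.\ Fact \ref{propisot2}, which needs $xy$ as an extra hypothesis even for erasing parts of blocks). Fact \ref{xtx} then correctly translates the hypotheses into ``$S(W)\models\sigma_1$ or $S(W)\models\sigma_2$, and $S(W)\not\models\sigma_\mu$,'' and your two subcases (exactly one of $\sigma_1,\sigma_2$ versus both) partition the problem correctly, with the second alternative of the lemma only available in the second subcase. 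Note, however, that there is no proof in this paper to compare against: Lemma \ref{fb1l} is imported verbatim from \cite[Corollary 4.7]{OS3}.

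The genuine gap is that in both subcases the NFB conclusion --- which is the entire content of the lemma --- is never established; it is deferred to ``an appropriate row'' of a table that, as you yourself correctly verify, does not exist in this paper. Once $\sigma_1$ holds, Lemma \ref{axil} makes $xytxy$ and $xytyx$ non-isoterms and (setting $t\mapsto 1$) makes $xyyx$ a non-isoterm, while $xtxyty$ remains an isoterm: this simultaneously disqualifies both rows of Table \ref{class} (hence Theorem \ref{xyyx}, Theorem \ref{sufnfb} and Corollary \ref{xytxy}) and all four rows of Table \ref{classes} (hence Theorem \ref{t1} and Lemma \ref{nfbsuf}), and the same obstruction recurs in your second subcase when $xxyy$ fails to be an isoterm. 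So no NFB criterion quoted in this paper can close either subcase; what is needed are further sufficient conditions from Theorem 4.4/Table 1 of \cite{OS1} and \cite{OS3} with isoterm sets such as $\{xtxyty, xtytxy\}$ (resp.\ $\{xtxyty\}$ alone) and non-isoterm sets tuned to block-2-simple substitutions --- precisely the machinery the present paper declined to reproduce, which is presumably why it cites the lemma rather than proving it. A second, smaller omission: the lemma concerns arbitrary block-2-simple sets, not 2-limited ones, so any candidate criterion whose non-isoterm set contains $A_4$ (as in rows 2 and 3 of Table \ref{classes}) leaves open the case where some word of $A_4$ \emph{is} an isoterm for $S(W)$; your sketch never addresses this, and it cannot be waved away by Lemma \ref{nlimited}. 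As it stands, the proposal is a correct frame around a missing core.
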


\section{An algorithm that recognizes FB sets of words among sets of 2-limited block-2-simple words} \label{sec:alg}

Using Lemma \ref{hfb1} and  quasi-order $\preceq$ on sets of words introduced in Section \ref{sec:iso}, Theorem \ref{main1} can be reformulated as follows.

\begin{theorem} \label{main} A set of 2-limited block-2-simple words $W$ is FB if and only if

 $W \sim \{a^2b^2, abab, abba\}$ or $W \sim \{a^2b^2\}$ or

 $\{atbba\} \preceq W$ or $\{abbta\} \preceq W$ or

$\{abtab, abtba, atbba\} \preceq W \preceq \{abtab, abtba\}$ or

$\{abtab, abtba, abbta\} \preceq W \preceq \{abtab, abtba\}$.

\end{theorem}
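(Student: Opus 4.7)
The ``if'' direction is immediate from the cited lemmas: case 1 from Lemma~\ref{nlimited}(iii), case 2 from Lemma~\ref{aabb}(i), cases 3 and 4 from Lemma~\ref{hfb1}(ii),(iii), and the two interval cases from Lemma~\ref{abtab}(iii).

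For the ``only if'' direction, assume $W$ is 2-limited, block-2-simple, and FB. By Lemma~\ref{nlimited}(iii) we already have $\{a^2b^2, abab, abba\} \preceq W$, so it suffices to produce an upper bound matching one of the six cases. Split on whether $W \preceq xtxyty$. If yes, Lemma~\ref{fb1l} combined with FB gives either (a) both of $\{xytxty, xtytxy\}$ are isoterms for $S(W)$, or (b) $W \preceq xxyy$ with neither an isoterm. Option (b) immediately yields $W \sim \{a^2b^2\}$ via Lemma~\ref{aabb}(ii)---case 2. Option (a) requires more work: apply Corollary~\ref{sevenint}(iii),(iv) and Theorem~\ref{xyyx} to deduce that $xyxy$, $xxyy$, and $xyyx$ must be isoterms for $S(W)$ (otherwise $W$ would fall into one of the NFB intervals), which together with 2-limitedness forces $W \sim \{a^2b^2, abab, abba\}$---case 1.

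If $W \not\preceq xtxyty$, first consider the case where both $xytxty, xtytxy$ are isoterms. Then Lemma~\ref{fb2} plus FB gives $W \preceq \{xytxy, xytyx\}$; deleting the linear variables in $xtytxy \sim xtytyx$ via Fact~\ref{xy3}(i) yields $W \preceq xyyx$, whereupon Theorem~\ref{xyyx} contradicts FB. Otherwise at least one of $\{xytxty, xtytxy\}$ fails to be an isoterm, so Lemma~\ref{hfb1}(i) makes $W$ hereditary FB. Further split on whether both $xytxy, xytyx$ are isoterms for $S(W)$: if at least one is not, Lemma~\ref{hfb1}(iv) places $W$ in case~3 or~4; if both are, then $W \preceq \{abtab, abtba\}$, and Lemma~\ref{abtab}(iv),(v) separates case~5 (when $W \not\preceq yxxty$) from case~6 (when $W \not\preceq ytxxy$). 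The residual subcase $W \preceq \{yxxty, ytxxy\}$ puts $W$ inside Corollary~\ref{sevenint}(ii)'s NFB interval (using 2-limitedness together with $W \not\preceq xtxyty$ and $W \not\preceq xyyx$ from Theorem~\ref{xyyx}), again contradicting FB.

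The main obstacle is the hereditary-FB sub-analysis in the third paragraph, where four cases (3, 4, 5, 6) must be disentangled via the isoterm status of $xytxy$, $xytyx$, $yxxty$, $ytxxy$, and the residual ``all isoterms'' subcase must be eliminated by the NFB machinery of Corollary~\ref{sevenint}(ii). The option (a) branch also relies on carefully invoking the NFB intervals of Corollary~\ref{sevenint}(iii),(iv) to pin down precisely which short words must be isoterms for $S(W)$.
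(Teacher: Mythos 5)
There are two genuine errors in your ``only if'' direction, both in branches where the paper's route differs from yours. First, in option~(a) of the case $W \preceq xtxyty$, you apply Corollary~\ref{sevenint}(iii),(iv) directly, but those intervals require $W \preceq \{xytxy, xytyx\}$, and knowing only that $xytxty$ and $xtytxy$ are isoterms does not give this: you must first invoke Lemma~\ref{fb2} (plus FB) to upgrade $W \preceq \{xytxty, xtytxy\}$ to $W \preceq \{xytxy, xytyx\}$, exactly as the paper does. Moreover, your appeal to Theorem~\ref{xyyx} to force $xyyx$ to be an isoterm is inapplicable here: its hypothesis is $W \not\preceq xtxyty$, which contradicts the case you are in, and in no reading can it \emph{force} $xyyx$ to be an isoterm (it yields NFB when $xyyx$ \emph{is} one). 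The tool the paper uses for this step is Theorem~\ref{t1} with row~4 of Table~\ref{classes} ($N = \{xyxyx\} \cup \{xy^mx \mid m>1\}$, with $m=2$ giving $xyyx$), a row which does not appear in Corollary~\ref{sevenint} at all, so your cited results cannot close this step.

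Second, and more seriously, in the case $W \not\preceq xtxyty$ with both $xytxty, xtytxy$ isoterms, your step ``deleting the linear variables in $xtytyx$ yields $W \preceq xyyx$'' is false: the isoterm property is not inherited under deleting variables (deletion preserves \emph{identities}, i.e., if $S \models {\bf u} \approx {\bf v}$ then $S \models {\bf u}(\mathfrak X) \approx {\bf v}(\mathfrak X)$, which is the opposite direction). Concretely, take $W_0 = \{abtab, abtba, atbba\}$: both $xytxty$ and $xtytxy$ are isoterms for $S(W_0)$ (by Lemma~\ref{subSW1}, since the substitution $x \mapsto a$, $y \mapsto b$ with suitable values on $t_1, t_2$ sends each of them to $abtab \in W_0^c$, no non-trivial block-balanced identity is possible), yet $xyyx$ is not an isoterm: deleting $t$ from the basis identity $xxyty \approx yxxty$ of Lemma~\ref{abtab}(i), after swapping $x$ and $y$, gives $S(W_0) \models yyxx \approx xyyx$. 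So your branch is not contradictory --- it is non-vacuous and contains FB sets belonging to cases~5 and~6 (including $W_0$ itself), which your argument would misclassify as NFB. The paper instead handles the whole sub-case ``$W \not\preceq xtxyty$ and $W \preceq \{xytxy, xytyx\}$'' via Lemma~\ref{abtab}(iv),(v) and Fact~\ref{xy3}, landing in cases~5/6 regardless of the status of $xytxty$ and $xtytxy$; your tree only reaches cases~5/6 through the hereditary-FB branch, so the classification is incomplete. (A minor additional point: FB of cases~3/4 needs the \emph{hereditary} finite basedness supplied by Lemma~\ref{hfb1}(iv), not just Lemma~\ref{hfb1}(ii),(iii), since $\{atbba\} \preceq W$ only places $\var S(W)$ inside $\var S(\{atbba\})$, and subvarieties of FB varieties need not be FB.)
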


\begin{proof}
Let $W$ be a set of 2-limited block-2-simple words and consider two cases.

{\bf Case 1:} $W \not \preceq xtxyty$.

In this case Theorem \ref{xyyx} implies that either $W$ is NFB or $W \not \preceq xyyx$. If $W \not \preceq xyyx$, then Theorem \ref{t1}(row 1 in Table \ref{classes}) and the fact that each word in $W$ is 2-limited implies that either $W$ is NFB or  $W \not\preceq \{yxxty, ytxxy\}$. So, we may assume that $W \not\preceq \{yxxty, ytxxy\}$.

If $W \preceq \{xytxy, xytyx\}$ then  Lemma \ref{abtab}  implies that either
\[\{abtab, abtba, atbba\} \preceq W \preceq \{abtab, abtba\}\] or
\[\{abtab, abtba, abbta\} \preceq W \preceq \{abtab, abtba\}.\]
Thus $W$ is FB by Corollary \ref{D1}.

If $W \not \preceq \{xytxy, xytyx\}$ then Lemma \ref{fb2} implies that either $W$ is NFB or $W \not \preceq \{xytxty, xtytyx\}$.

If $W \not \preceq \{xytxty, xtytyx\}$ then Lemma \ref{hfb1} implies that $W$ is HFB  and
$\{atbba\} \preceq W$ or $\{abbta\} \preceq W$.

{\bf Case 2:} $W \preceq xtxyty$.

 If $W \not\preceq \{xytxty, xtytxy\}$, then Lemma \ref{fb1l} implies that either $W$ is NFB or $W \preceq xxyy$ and both words $\{xytxty, xtytxy\}$ are not isoterms for $S(W)$.
 In the later case Lemma \ref{aabb} implies that $W$ is FB and $W \sim \{a^2b^2\}$.

 If $W \preceq \{xytxty, xtytxy\}$, then Lemma \ref{fb2} implies that either $W$ is NFB or $W \preceq \{xtxyty, xytxy, xytyx\}$.
Then Theorem \ref{t1}(row 2 in Table \ref{classes}) implies that  $W$ is either NFB or $W \preceq xyxy$.
Also, Theorem \ref{t1}(row 4 in Table \ref{classes})) implies that $W$ is either NFB or $W \preceq xyyx$.
If $W \preceq xyyx$, then by Theorem \ref{t1}(row 3 in Table \ref{classes}) we have that $W$ is either NFB or $W \preceq xxyy$. If $W \preceq \{xyxy, xyyx, xxyy\}$ then Lemma \ref{nlimited} implies that $W$ is FB and $W \sim \{aabb, abab, abba\}$.
\end{proof}

\section {Some sufficient conditions under which a word is e.e. to a set of words with at most two non-linear variables} \label{sec:twovar}

 According to Example 5.4 in \cite{OS2}  the monoid $S(\{abctacb\})$ is FB and is contained in  $\var \{\sigma_\mu \}$. But $\{abctacb\}$
is not e.e. to  any set of block-2-simple words, because it does not satisfy any of the conditions of Theorem \ref{main1}.

In this section, we show that every HFB set of words is e.e. to a set of words with at most two non-linear variables (cf. Theorem \ref{eqeq}).
We also show that every FB set of  block-2-simple words $W$ such that $S(W)$ is contained in one of the varieties $\var \{\sigma_\mu\}$, $\var \{\sigma_1\}$ or $\var \{\sigma_2\}$ is e.e. to  a set of words with at most two non-linear variables (cf. Corollary \ref{eq2}).

First, we formulate a general sufficient condition under which a word is e. e. to a set of words with at most two non-linear variables.
The set of all linear variables in a word $\bf u$ is denoted by $\lin ({\bf u})$
and the set of all non-linear variables is denoted by $\non ({\bf u})$.

\begin{lemma}  \label{delblock}
Let $\bf u$ be a word such that

(i) each block of $\bf u$ is a product of powers of pairwise distinct variables;

(ii) if two distinct variables $x$ and $y$ are adjacent in some block of $\bf u$ then they are adjacent in each other block that contains both $x$ and $y$.

Let $B$ denote the set of unordered pairs of non-linear variables  defined as follows:
$\{x,y\} \in B$ if and only if some occurrences of $x$ and $y$ are adjacent in $\bf u$.
Then
\[{\bf u} \sim \bigcup_ {x \in \non({\bf u})} {\bf u}(x, \lin({\bf u})) \cup \bigcup_{\{x,y\} \in B} {\bf u}(x,y,\lin({\bf u})).\]

\end{lemma}

\begin{proof} According to Lemma 4.1 in \cite{JS}, if $\bf v$ is obtained by erasing a prefix (suffix) of a block of a word $\bf u$ then  $\{{\bf u}, xy\} \preceq {\bf v}$. This implies that
\[{\bf u} \preceq
\bigcup_ {x \in \non({\bf u})} {\bf u}(x, \lin({\bf u})) \cup \bigcup_{\{x,y\} \in B} {\bf u}(x,y,\lin({\bf u})).\]

Suppose now that each word in the set  \[\bigcup_ {x \in \non({\bf u})} {\bf u}(x, \lin({\bf u})) \cup \bigcup_{\{x,y\} \in B} {\bf u}(x,y,\lin({\bf u}))\] is an isoterm for a monoid $S$. Then each pair of variables in stable in $\bf u$ with respect to $S$ and consequently, the word $\bf u$ is an isoterm for $S$.
Therefore,
\[{\bf u} \succeq \bigcup_ {x \in \non({\bf u})} {\bf u}(x, \lin({\bf u})) \cup \bigcup_{\{x,y\} \in B} {\bf u}(x,y,\lin({\bf u})).\]
\end{proof}

\begin{theorem} \label{eqeq}
Every HFB word is e.e. to a finite set of words with at most two non-linear variables.
\end{theorem}

\begin{proof}  Let $\bf u$ be a HFB word.
Then modulo duality, every adjacent pair of occurrences of two non-linear variables $x \ne y$ in ${\bf u}$ is of the form $\{{_{1{\bf u}}x}, {_{1{\bf u}}y} \}$ by  Lemma \ref{hfb}.
Therefore, each block of $\bf u$ is a product of powers of pairwise distinct variables.
If some block $\bf B$ of $\bf u$ contains a subword $(_{1{\bf u}}x)({_{1{\bf u}}y})$ for some distinct variables $x$ and $y$, then any other block that contains $x$ or $y$ can only be either a power of $x$ or a power of $y$.
Therefore,  ${\bf u}$  is e.e. to a finite set of words with at most two non-linear variables  by Lemma  \ref{delblock}.
\end{proof}

\begin{ex}  $\{abct_1at_2bt_3c\} \sim \{abt_1at_2b \}$ is HFB.

\end{ex}

The next lemma is an immediate consequence of Theorem 3.5 in \cite{OS3}.

\begin{lemma} \label{firstsim} For a set of words $W$ the following is true:

(i) $S(W) \in \var \{\sigma_1, \sigma_2 \}$ iff none of the words $\{xytxty, xtytxy \}$ is an isoterm for $S(W)$ iff every adjacent pair of occurrences of two non-linear variables $x \ne y$ in each ${\bf u} \in W$ is of the form $\{{_{1{\bf u}}x}, {_{last{\bf u}}y} \}$;

(ii) $S(W) \in \var \{\sigma_\mu \}$ iff the word $xtxyty$ is not an isoterm for $S(W)$ iff every adjacent pair of occurrences of two non-linear variables $x \ne y$ in each ${\bf u} \in W$ is either of the form $\{{_{1{\bf u}}x}, {_{1{\bf u}}y} \}$ or of the form $\{{_{last{\bf u}}x}, {_{last{\bf u}}y} \}$.

\end{lemma}

\begin{theorem} \label{eqeq1}

(i) If $\bf u$ is a word such that ${\bf u} \not  \preceq xytxty$, ${\bf u} \not  \preceq xtxtxy$ and each non-linear variable is at least 3-occurring
in $\bf u$ then $\bf u$  is e.e. to a set of words with at most two non-linear variables.

(ii)  If $\bf u$ is a block-2-simple word such that ${\bf u} \not  \preceq xytxty$ and ${\bf u} \not  \preceq xtxtxy$  then $\bf u$  is e.e. to a set of words with at most two non-linear variables.

(iii) If $\bf u$ is a block-2-simple word such that ${\bf u} \not  \preceq xtxyty$
then $\bf u$ is e.e. to a set of words with at most two non-linear variables.

\end{theorem}

\begin{proof}

(i) By Lemma \ref{firstsim}, every adjacent pair of occurrences of two non-linear variables $x \ne y$ in ${\bf u}$ is of the form $\{{_{1{\bf u}}x}, {_{last{\bf u}}y} \}$. Since each non-linear variable occurs at least three times in $\bf u$, each block of $\bf u$ is a product of powers of pairwise distinct variables. (If 2-occurring variables were allowed then one could have ${\bf u} = [Yn]^t\hskip .02in xy_1z_1 \dots y_nz_nx \hskip .02in ^t[Zn]$).
Also, if two distinct variables $x$ and $y$ are adjacent in some block of $\bf u$ then no other block of $\bf u$ contains both $x$ and $y$.
Therefore,  ${\bf u}$  is e.e. to a finite set of words with at most two non-linear variables by Lemma  \ref{delblock}.

(ii) By Lemma \ref{firstsim}, every adjacent pair of occurrences of two non-linear variables $x \ne y$ in ${\bf u}$ is of the form $\{{_{1{\bf u}}x}, {_{last{\bf u}}y} \}$. Notice that for each pair of variables $x$ and $y$ the word $\bf u$ contains at most one block that contains both $x$ and $y$ and this block (if any)  is  $x^ny^m$  or $y^nx^m$  for some $n,m > 0$.  So, the statement follows from Lemma \ref{delblock}.

(iii) By Lemma \ref{firstsim}, every adjacent pair of occurrences of two non-linear variables $x \ne y$ in ${\bf u}$ is either of the form $\{{_{1{\bf u}}x}, {_{1{\bf u}}y} \}$ or of the form $\{{_{last{\bf u}}x}, {_{last{\bf u}}y} \}$. Then each block of $\bf u$ that is not a power of a variable  is either of the form $(_{1 \bf u}x) (y^n)$ or $(x^m) ({_{last{\bf u}}y})$ for some variables $x \ne y$ and $n,m>0$. So, the statement follows from Lemma \ref{delblock}.
\end{proof}

\begin{cor} \label{eq2}  Let $W$ be a set of block-2-simple words such that at least one of the words $\{ xtxyty, xytxty, xtytxy\}$ is not an
isoterm for $S(W)$.  If $W$ is FB then $W$ is e.e. to a set of words with at most two non-linear variables.
\end{cor}

\begin{proof} If $W \not \preceq xtxyty$ then
$W$ is e.e. to a set of words with at most two non-linear variables by Theorem \ref{eqeq1}.  If $W$ is a FB set such that $W \preceq xtxyty$  but $W \not \preceq \{xytxty, xtytxy\}$ then by Lemma \ref{fb1l} we have  $W \not \preceq xytxty$ and  $W \not \preceq xtytxy$. Therefore, the set $W$ is e.e. to a set of words with at most two non-linear variables by Theorem \ref{eqeq1}.
\end{proof}

\begin{cor} \label{twoletter}  Every finitely based set of 2-limited block-2-simple words is e.e. to a set of words with at most two non-linear variables.
\end{cor}

\begin{proof} Let $W$ be a FB set of 2-limited block-2-simple words. If one of the words $\{ xtxyty, xytxty, xtytxy\}$ is not an
isoterm for $S(W)$ then $W$ is e.e. to a set of words in at most two non-linear variables by Corollary \ref{eq2}.
If  $W \preceq \{ xtxyty, xytxty, xtytxy\}$ then the proof of  Theorem \ref{main} implies that  $W \sim \{aabb, abab, abba\}$.
\end{proof}

\section{Another sufficient condition under which an arbitrary set of words is NFB}  \label{sec:sufcon}

The goal of this section is to prove a simple sufficient condition under which an arbitrary set of words is NFB.
This sufficient condition is similar to the one in Theorem \ref{xyyx}.
It says that a set of words $W$ is NFB provided a certain word is an isoterm for $W$ and a certain word is not.
It seems that such simple sufficient conditions are rare.
First, we establish a new sufficient condition under which a monoid is NFB.

If $S$ is a semigroup and $\bf u$ is a word then we use $[\![{\bf u}]\!]_S$ to denote the equivalence class of the fully invariant congruence $\sim_S$ on $\mathfrak A^+$ containing $\bf u$.  The set $\ocs({\bf u}) = \{ {_{i{\bf u}}x} \mid x \in \mathfrak A, 1 \le i \le occ_{\bf u} (x) \}$ of all occurrences of all variables in ${\bf u}$ is called the {\em occurrence set of ${\bf u}$}. The word ${\bf u}$ induces a (total) order $<_{\bf u}$ on the set $\ocs({\bf u})$ defined by ${_{i{\bf u}}x} <_{\bf u} {_{j{\bf u}}y}$ if and only if the $i^{th}$ occurrence of $x$ precedes the $j^{th}$ occurrence of $y$ in ${\bf u}$.
The following lemma is a special case of Lemma 2.5 in \cite{OS1}.

\begin{lemma} \label{nfblemma} Let $S$ be a semigroup.
Suppose that for each $n$ large enough one can find a balanced identity ${\bf U}_n \approx {\bf V}_n$ of $S$ in at least $n$ variables
such for some variables $x$ and $z$ in  ${\bf U}_n$ the following is true:

(i) $({_{1{\bf U}_n}x}) <_{\bf U} ({_{1{\bf U}_n}z})$ but   $({_{1{\bf V}_n}z}) <_{\bf U} ({_{1{\bf V}_n}x})$;

(ii) if ${\bf U} \in [\![{\bf U}_n]\!]_S$ such that  $({_{1{\bf U}}x}) <_{\bf U} ({_{1{\bf U}}z})$
then for every identity ${\bf u} \approx {\bf v}$ of $S$ in less than $n/3$ variables and every substitution  $\Theta: \mathfrak A \rightarrow \mathfrak A^+$ such that $\Theta({\bf u})={\bf U}$ we have $({_{1{\bf V}}x}) <_{\bf V} ({_{1{\bf V}}z})$ where
${\bf V} = \Theta({\bf v})$.

Then $S$ is NFB.
\end{lemma}

 Let $\Theta: \mathfrak A \rightarrow \mathfrak A^+$ be a substitution such that $\Theta({\bf u})={\bf U}$. Then
$\Theta$ induces a map $\Theta_{\bf u}$ from $\ocs({\bf u})$
into  subsets of $\ocs({\bf U})$ as follows.
If $1 \le i \le occ_{\bf u}(x)$ then $\Theta_{\bf u}({_{i{\bf u}}x})$ denotes the set of all elements of $\ocs({\bf U})$
contained in the subword of ${\bf U}$ of the form $\Theta(x)$ that corresponds to the $i^{th}$ occurrence of variable $x$ in ${\bf u}$. For example, if $\Theta(x)=ab$ and $\Theta(y)=bab$ then $\Theta_{xyx}({_{2(xyx)}x})=\{{_{3(abbabab)}a}, {_{4(abbabab)}b} \}$.
Evidently, for each $d \in \ocs({\bf u})$ the set $\Theta_{\bf u} (d)$ is an interval in $(\ocs({\bf U}), <_{\bf U})$.
Now we define a function $\Theta^{-1}_{\bf u}$ from $\ocs({\bf U})$ to $\ocs({\bf u})$ as follows.
If $c \in \ocs({\bf U})$ then $\Theta^{-1}_{\bf u}(c)=d$ when $\Theta_{\bf u} (d)$ contains $c$.
For example, $\Theta^{-1}_{xyx}({_{3(abbabab)}a})= {_{2(xyx)}x}$.

\begin{theorem} \label{sufnfb} Let $S$ be a monoid which
satisfies
\[
{\bf U}_n = x z [Yn] p t z [nY] p x \approx  z [Yn] p x t xz [nY] p = {\bf V}_n.
\]

If $xytxy$ is an isoterm for $S$ then $S$ is NFB.
\end{theorem}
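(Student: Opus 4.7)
My plan is to apply Lemma \ref{nfblemma1} to the family of identities ${\bf U}_n \approx {\bf V}_n$ from the hypothesis, with the set
\[
X \;=\; \{{_{1{\bf U}_n}}x,\; {_{2{\bf U}_n}}x\}
\]
consisting of the two occurrences of $x$ in ${\bf U}_n$. Since $xytxy$ is an isoterm for $S$, so is every subword of it; in particular $xy$ is an isoterm, so Lemma \ref{samelabels} is available for every identity of $S$ whose two sides both contain a given variable twice.

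Condition (i) of Lemma \ref{nfblemma1} is immediate: in ${\bf U}_n = xz[Yn]ptz[nY]px$ the two $x$'s occupy the left-most and right-most positions of the word, whereas in ${\bf V}_n = z[Yn]pxtxz[nY]p$ they sit tightly together inside the central fragment $p\,x\,t\,x\,z$. Thus $l_{{\bf U}_n,{\bf V}_n}(X)$ lies strictly between the two $p$'s, so $X$ is $l_{{\bf U}_n,{\bf V}_n}$-unstable in ${\bf U}_n \approx {\bf V}_n$.

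For condition (ii), I would fix ${\bf U}\in[\![{\bf U}_n]\!]_S$ with $X$ stable in ${\bf U}_n\approx{\bf U}$, an identity ${\bf u}\approx{\bf v}$ of $S$ in fewer than $n/4$ variables, and a substitution $\Theta$ with $\Theta({\bf u})={\bf U}$. Apply Fact \ref{2occ} to the two occurrences of $x$ in ${\bf U}$. If both $x$'s come from a single doubly-occurring variable $x'\in\con({\bf u})$, then Lemma \ref{samelabels} immediately yields the required $l_{{\bf U},\Theta({\bf v})}$-stability of $l_{{\bf U}_n,{\bf U}}(X)$. Otherwise the two $x$'s arise from (one or two) linear variables $t_1,t_2$ of ${\bf u}$; here I would argue by pigeonhole, using that $|\con({\bf u})|<n/4$ while the $n$ distinct variables $y_1,\dots,y_n$ of ${\bf U}$ all occur inside values $\Theta(w)$ for $w\in\con({\bf u})$. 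So some $\Theta(w)$ contains at least four $y_i$'s, and combining this with the isoterm property of $xytxy$ — which forces the nesting of each $x$ with each surrounding $y_i$ across a linear separator to be preserved under every identity of $S$ — pins down enough of the structure of $\Theta({\bf v})$ to conclude that its two occurrences of $x$ cannot be swapped, merged, or displaced relative to the $y_i$'s.

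The main obstacle is this second case, and in particular the subcase $t_1=t_2$ of Fact \ref{2occ}, where both occurrences of $x$ sit inside a single $\Theta(t)$ for a linear $t\in\con({\bf u})$; there the $y_i$'s trapped between the two $x$'s, whose positions relative to the $x$'s are frozen by the $xytxy$-isoterm condition, must be shown to block every rearrangement that an identity on fewer than $n/4$ variables could introduce on the ${\bf v}$-side.
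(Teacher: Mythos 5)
Your choice of $X=\{{_{1{\bf U}_n}x},\,{_{2{\bf U}_n}x}\}$ fails already at condition (i) of Lemma \ref{nfblemma1}, and this is fatal to the whole scheme. The map $l_{{\bf U}_n,{\bf V}_n}$ sends the $i$-th occurrence of a variable to the $i$-th occurrence of the \emph{same} variable, and $l$-instability of a two-element set of occurrences means that the relative order of those two occurrences is reversed under the map --- it is not about where the occurrences sit relative to the other letters of the word. (Compare how the paper, inside the proof, glosses stability of $\{{_{1{\bf U}_n}x},{_{1{\bf U}_n}z}\}$ in ${\bf U}_n \approx {\bf U}$ as simply ${_{1{\bf U}}x} <_{\bf U} {_{1{\bf U}}z}$.) Since the first occurrence of $x$ precedes the second occurrence of $x$ in \emph{every} word, the set consisting of the two occurrences of one and the same variable is $l$-stable in every identity; so your $X$ can never be $l_{{\bf U}_n,{\bf V}_n}$-unstable, your argument that it "lies strictly between the two $p$'s" in ${\bf V}_n$ records a change of context rather than instability, and for the same reason the stability you set out to prove in condition (ii) is vacuously true and yields nothing. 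What your observation about the $x$'s moving from the ends of ${\bf U}_n$ into the middle of ${\bf V}_n$ actually detects is an unstable pair of occurrences of \emph{distinct} variables, such as $\{{_{1}}x,{_{1}}z\}$ or $\{{_{1}}x,{_{1}}p\}$.

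That is exactly the repair the paper makes: it takes $X=\{{_{1{\bf U}_n}x},\,{_{1{\bf U}_n}z}\}$, whose order genuinely flips ($x$ before $z$ in ${\bf U}_n$, $z$ before $x$ in ${\bf V}_n$), so condition (i) holds. For condition (ii) the paper derives from the isoterm $xytxy$ three structural properties (P1)--(P3) of any ${\bf U}\in[\![{\bf U}_n]\!]_S$ --- in particular that the second occurrences of $y_1,\dots,y_n$ appear in reversed order --- and then your pigeonhole idea is used correctly: since $|\con({\bf u})|<n/4$, some occurrence $c$ in ${\bf u}$ covers two reversed occurrences ${_{2{\bf U}}y_i},{_{2{\bf U}}y_j}$, and (P2) forces $c$ to be a \emph{linear} variable $t_1$ of ${\bf u}$. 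The chain of inequalities coming from (P1)--(P3) places the preimages of the occurrences of $x$ and $z$ on opposite sides of $t_1$, Fact \ref{2occ} identifies them as genuine first/second occurrences, and stability of $\{{_{1{\bf u}}x},{_{1{\bf u}}z}\}$ in ${\bf u}\approx{\bf v}$ follows because either $xtx$ or ${\bf u}(x,z,t,t_1)=xztzt_1x$ is an isoterm for $S$; Lemma \ref{samelabels} then transfers this to ${\bf U}\approx\Theta({\bf v})$. Note that the subcase you flag as the "main obstacle" (both occurrences of $x$ inside a single $\Theta(t)$) is precisely where the corrected choice of $X$ pays off: with occurrences of two different variables straddling $t_1$, the isoterm argument closes it, whereas with your $X$ there is nothing meaningful left to prove. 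So the proposal is not a different route to the theorem; its central definition is miscalibrated, and the remaining sketch is, by your own admission, incomplete at the decisive step.
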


\begin{proof} Take ${\bf U} \in [\![{\bf U}_n]\!]_S$. Since $xtx$ is an isoterm for $S$, the identity ${\bf U}_n \approx {\bf U}$ is balanced.
Since $xytxy$ is an isoterm for $S$ the word ${\bf U}$ satisfies the following properties:

(P1) for each $1 \le i \le n$ we have ${\bf U} (z,y_i,p, t) = zy_iptzy_ip$;

(P2) for each $1 \le i, j \le n$, if  ${_{1{\bf U}}y_i} <_{\bf U} {_{1{\bf U}}y_j}$ then  ${_{2{\bf U}}y_j} <_{\bf U} {_{2{\bf U}}y_i}$;

(P3) if  ${_{1{\bf U}}x} <_{\bf U} {_{1{\bf U}}z}$ then  ${_{2{\bf U}}p} <_{\bf U} {_{2{\bf U}}x}$.

Fix $n>10$. Evidently,  $({_{1{\bf U}_n}x}) <_{\bf U} ({_{1{\bf U}_n}z})$ but   $({_{1{\bf V}_n}z}) <_{\bf U} ({_{1{\bf V}_n}x})$.
Thus $S$ satisfies the first condition of Lemma \ref{nfblemma}.

In order to verify the second condition of Lemma \ref{nfblemma}, take ${\bf U} \in [\![{\bf U}_n]\!]_S$ such that
${_{1{\bf U}}x} <_{\bf U} {_{1{\bf U}}z}$.
Let ${\bf u}$ be a word in less than $n/3$ variables such that $\Theta({\bf u})={\bf U}$ for some substitution $\Theta: \mathfrak A \rightarrow \mathfrak A^+$.
Since  ${\bf u}$ has less than $n/3$ variables, for some
$c \in \ocs({\bf u})$ and $1 < i < j < n$ both ${_{2{\bf U}}y}_{i}$ and ${_{2{\bf U}}y}_{j}$ are contained in $\Theta_{\bf u}(c)$.
Then Property (P2) implies that $c$ must be the only occurrence of a linear variable $t_1$ in ${\bf u}$.

Since $\Theta^{-1}_{\bf u}$ is a homomorphism from $(\ocs({\bf U}), <_{\bf U})$ to $(\ocs({\bf u}), <_{\bf u})$, Properties (P1)--(P3)
imply that
\[\Theta^{-1}_{\bf u}({_{1{\bf U}}x})  \le_{\bf u}  \Theta^{-1}_{\bf u}({_{1{\bf U}}z}) \le_{\bf u}  \Theta^{-1}_{\bf u}({_{1{\bf U}}p}) \le_{\bf u}    \Theta^{-1}_{\bf u}({_{{\bf U}}t}) \le_{\bf u}  \Theta^{-1}_{\bf u}({_{2{\bf U}}z}) \le_{\bf u} \]
\[  \le_{\bf u} (_{\bf u}t_1)  \le_{\bf u} \Theta^{-1}_{\bf u}({_{2{\bf U}}p})  \le_{\bf u} \Theta^{-1}_{\bf u}({_{2{\bf U}}x}).\]

Now $\Theta^{-1}_{\bf u}({_{1{\bf U}}x})$ and $\Theta^{-1}_{\bf u}({_{1{\bf U}}z})$ are the first occurrences of some variables $x$ and $z$ in ${\bf u}$. Thus $({_{1{\bf u}}x}) <_{\bf u} ({_{1{\bf u}}z})$.  If either $x$ or $z$ is linear in ${\bf u}$ then $({_{1{\bf v}}x}) <_{\bf v} ({_{1{\bf v}}z})$  because the word $xtx$ is an isoterm for $S$.
 If both $x$ and $z$ occur twice in ${\bf u}$ then  $_{1{\bf u}}x = \Theta^{-1}_{\bf u}({_{1{\bf U}}x})$, $_{2{\bf u}}x =\Theta^{-1}_{\bf u}({_{2{\bf U}}x})$, $_{1{\bf u}}z = \Theta^{-1}_{\bf u}({_{1{\bf U}}z})$ and $_{2{\bf u}}z =\Theta^{-1}_{\bf u}({_{2{\bf U}}z})$. In this case $({_{1{\bf v}}x}) <_{\bf v} ({_{1{\bf v}}z})$,
because ${\bf u}(x,z,t,t_1) = xztzt_1x$ is an isoterm for $S$.

Therefore,  $({_{1{\bf V}}x}) <_{\bf V} ({_{1{\bf V}}z})$ and $S$ is NFB  by Lemma \ref{nfblemma}.
\end{proof}

\begin{cor} \label{xytxy} Let $W$ be a set of words such that $W \preceq xytxy$ but $W \not \preceq xytyx$ then $W$ is NFB.
\end{cor}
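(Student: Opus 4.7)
The plan is to reduce the corollary to Theorem \ref{sufnfb}. Since $W \preceq xytxy$ gives that $xytxy$ is an isoterm for $S(W)$, the hypothesis of Theorem \ref{sufnfb} is met once we show that $S(W) \models {\bf U}_n \approx {\bf V}_n$ for every $n>1$, where ${\bf U}_n = xz[Yn]ptz[nY]px$ and ${\bf V}_n = z[Yn]pxtxz[nY]p$. I would verify this family of identities via Lemma \ref{nfbcombinations} with $L = \mathfrak A^*$ and $N = \{xytyx\}$, in the exact spirit of the proof of Theorem \ref{xyyx}.

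The combinatorial work has two parts. First, a bookkeeping check: the identity ${\bf U}_n \approx {\bf V}_n$ is balanced, and its unstable pairs of variables are precisely $\{x, z\}$, $\{x, p\}$, and $\{x, y_i\}$ for $i = 1, \ldots, n$; every other pair (including any pair involving the linear variable $t$) appears in the same relative order on both sides. Second, for each such unstable pair $\{u, v\}$ and each substitution $\Theta$ sending $u$ and $v$ to words containing distinct letters $a$ and $b$, I would show both $\Theta({\bf U}_n) \preceq xytyx$ and $\Theta({\bf V}_n) \preceq xytyx$. The key observation is that ${\bf U}_n$ and ${\bf V}_n$, when restricted to any unstable pair, both induce the pattern $xyyx$. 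This produces in the substituted word a sub-pattern $a \cdots b \cdots b \cdots a$, and combining it with a suitable intermediate letter (from $\Theta(t)$, or otherwise from one of the $\Theta(y_i)$ or $\Theta(p)$ blocks) witnesses the $xytyx$-pattern after an injective renaming.

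The main obstacle lies in carrying out the second part uniformly across all unstable pairs and all substitutions. It is essentially routine but requires care, in the style of the case analysis of Theorem \ref{xyyx}, to handle potentially degenerate choices of $\Theta$ where $\Theta(t)$ is empty or where most of the images collapse to powers of $a$ and $b$. Once this verification is complete, Lemma \ref{nfbcombinations} yields the required identity for every $n > 1$, and Theorem \ref{sufnfb} concludes that $S(W)$ is NFB.
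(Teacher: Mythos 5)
Your overall architecture is exactly the paper's: apply Lemma \ref{nfbcombinations} with $L=\mathfrak A^*$ and $N=\{xytyx\}$ to get $S(W)\models {\bf U}_n \approx {\bf V}_n$ for all $n>1$, then invoke Theorem \ref{sufnfb}; your list of unstable pairs ($\{x,z\}$, $\{x,p\}$, $\{x,y_i\}$) is also correct. The gap is in your witnessing mechanism for $\Theta({\bf U}_n)\preceq xytyx$. You propose to find the pattern $a\cdots b\cdots b\cdots a$ and then a ``suitable intermediate letter'' occurring once between the two $b$'s, so that $\Theta({\bf U}_n)$ \emph{deletes} to $xytyx$ up to renaming. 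This mechanism fails in exactly the degenerate cases you set aside as routine: take $\Theta(x)=ab$, $\Theta(z)=b$, and let $\Theta$ send $t$, $p$ and every $y_i$ to the empty word. Then $\Theta({\bf U}_n)=abbbab$, a word in which no letter occurs exactly once and which does not delete to any word of the form $xytyx$ (up to renaming): restricting to $\{a,b\}$ gives $abbbab$ itself, and there is no third letter available. So no deletion-plus-renaming can witness the needed relation, and in general $\preceq$ between words is strictly stronger than ``contains the pattern after deleting letters.''

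The paper avoids this by a structural dichotomy rather than pattern-hunting. If $\Theta(z[Yn]p)=\Theta(z[nY]p)={\bf C}$, then $\Theta({\bf U}_n)=\Theta(x)\,{\bf C}\,\Theta(t)\,{\bf C}\,\Theta(x)$, and a word of this palindromic shape, with $a$ occurring in $\Theta(x)$ and $b\ne a$ occurring in ${\bf C}$, satisfies $\preceq xytyx$ for isoterm-theoretic reasons (in the example above, $abbbab\preceq xytyx$ does hold, but proving it requires arguing about which identities $xytyx \approx {\bf w}$ a monoid with $abbbab$ as an isoterm can satisfy, not exhibiting a sub-pattern). If instead $\Theta(z[Yn]p)\ne\Theta(z[nY]p)$, then there exist $1\le i<j\le n$ with $\Theta(y_i)\Theta(y_j)\ne\Theta(y_j)\Theta(y_i)$ and $\Theta(y_{i+1})=\dots=\Theta(y_{j-1})=1$, which again yields $\Theta({\bf U}_n)\preceq xytyx$, and similarly for $\Theta({\bf V}_n)$. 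So your proof needs to be repaired by replacing the ``find a once-occurring middle letter'' step with this case split and the accompanying $\preceq$-facts; as written, the central verification would fail precisely on the collapsing substitutions that Lemma \ref{nfbcombinations} obliges you to handle.
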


\begin{proof} We use Lemma \ref{nfbcombinations} with $L=\mathfrak A^*$ and $N =\{xytyx\}$.

Fix some $n> 1$. The only unstable pairs of variables  in $x z [Yn] p t z [nY] p x \approx  z [Yn] p x t x z [nY] p$ are  $\{x,z\}$, $\{x,p\}$, $\{x,y_i\}$, $i=1, \dots n$. Let $q \in \{z, p, y_1, \dots, y_n\}$ and $\Theta: \mathfrak A \rightarrow \mathfrak A^*$ be a substitution such that $\Theta(x)$ contains some letter $a$ and $\Theta(q)$ contains $b\ne a$.

If $\Theta (z [Yn] p) = \Theta (z [nY] p) = {\bf C}$ then $\Theta (x z [Yn] p t z [nY] p x) = \Theta(x){\bf C} \Theta(t){\bf C}\Theta(x) \preceq xytyx$.
If $\Theta (z [Yn] p) \ne  \Theta (z [nY] p)$ then for some $1 \le i < j \le n$ we have $\Theta(y_i)\Theta(y_j) \ne \Theta(y_j)\Theta(y_i)$ and
$\Theta(y_{i+1}) = \dots = \Theta(y_{j-1}) = 1$. In this case we also have that

$\Theta (x z [Yn] p t z [nY] p x) \preceq xytyx$.
Similarly, we have
$\Theta (z [Yn] p x t xz [nY] p) \preceq xytyx$.

  Lemma \ref{nfbcombinations} implies that for each $n>1$, $S(W)$ satisfies $x z [Yn] p t z [nY] p x \approx  z [Yn] p x t x z [nY] p$.
Therefore, $S(W)$ is NFB by Theorem \ref{sufnfb}.\end{proof}

 Using the interval notation, Theorem \ref{xyyx} and Corollary \ref{xytxy}  can be reformulated as follows
(cf. Table \ref{intervals}).

\begin{cor}\label{sevenint} Every monoid in each of the following intervals is NFB:

(i) (Theorem \ref{xyyx}) $[\{xyyx\}, \iso(\sigma_\mu)]$;

(ii) (Corollary \ref{xytxy})  $[\{xytxy\}, \iso( xytyx \approx yx t xy)]$.

\end{cor}

\begin{table}[tbh]
\begin{center}
\small
\begin{tabular}{|l|l|l|}
\hline set $I$ &identity  ${\bf U}_n \approx {\bf V}_n$ for $n>1$ & set $N$\\
\hline

\protect  \rule{0pt}{10pt}$xyyx$ & $xx[Yn][nY] \approx [Yn][nY]xx$ &$xt_1xyt_2y$ \\
\hline

\protect\rule{0pt}{10pt}$xytxy$ & $xz[Yn]ptz[nY]px \approx z[Yn]pxtxz[nY]p$ & $xytyx$ \\
\hline

\end{tabular}
\caption{Two NFB intervals $[I, \iso(\mathfrak A^*, \Sigma)]$ \protect\rule{0pt}
{11pt}}

\label{intervals}
\end{center}
\end{table}

\subsection*{Acknowledgement} The author thanks Edmond Lee for helpful comments.

\end{document}